\numberwithin{equation}{section}
\newcommand\norm[1]{\left\|#1\right\|}
\newcommand\abs[1]{\lvert#1\rvert}
\newcommand\Abs[1]{\left\lvert#1\right\rvert}
\newcommand{\tforall}{\text{ for all }}
\newcommand{\ms}{\text{ms}}
\newcommand{\aux}{\text{aux}}
\newcommand{\on}{\text{on}}
\newcommand{\glo}{\text{glo}}
\newcommand{\dg}{\text{DG}}
\newtheorem{example}[theorem]{Example}
\DeclareMathOperator{\spa}{span}
\begin{document}

\title{Online adaptive algorithm for Constraint Energy Minimizing Generalized Multiscale Discontinuous Galerkin Method}
\author{
Sai-Mang Pun\thanks{Department of Mathematics, Texas A\&M University, College Station, TX 77843, USA (\texttt{smpun@math.tamu.edu})}
\and Siu Wun Cheung\thanks{Center for Applied Scientific Computing, Lawrence
  Livermore National Laboratory, Livermore, CA 94550, USA (\texttt{cheung26@llnl.gov})}
}
\maketitle

\begin{abstract}
In this research, we propose an online basis enrichment strategy within the framework of a recently developed constraint energy minimizing generalized multiscale discontinuous Galerkin method (CEM-GMsDGM). 
Combining the technique of oversampling, 
one makes use of the information of the current residuals to adaptively construct basis functions in the online stage to reduce the error of multiscale approximation. 
A complete analysis of the method is presented, which shows the proposed online enrichment leads to a fast convergence from multiscale approximation to the fine-scale solution. 
The error reduction can be made sufficiently large by suitably selecting oversampling regions and the number of oversampling layers. 
Further, the convergence rate of the enrichment algorithm depends on a factor of exponential decay regarding to the number of oversampling layers and a user-defined parameter. 
Numerical results are provided to demonstrate the effectiveness and efficiency of the proposed online adaptive algorithm. 
\end{abstract}

\section{Introduction}
\label{sec:intro}
Physical modeling in heterogeneous media with multiple scales and high contrast 
is the heart of many scientific and engineering applications. 
In many cases, the underlying mathematical model does not possess closed-form analytic solutions. 
Extensive research effort has been devoted to develop computational methods for 
obtaining numerical solutions from simulation. 
Mesh-based methods, such as finite difference, finite volume and finite element methods, 
have been well-studied and widely used for numerical modeling in various engineering applications. 
In recent years, the development of discontinuous Galerkin (DG) method has been very active
in fluid dynamics \cite{conv-diff,cockburn05,riviere2008discontinuous,sdg-ns1} 
and wave propagations \cite{ipdg-wave,newdg,newdg1,meta}.
In contrary to conforming Galerkin (CG) finite element methods, 
DG methods make use of piecewise basis functions for achieving 
better conservation properties in convection-dominated problems and wave propagations.
While the development of these numerical schemes has become very mature 
and rigorous mathematical theory has been built for justification of these methods, 
straightforward application of these numerical solvers are not efficient 
for highly heterogeneous problems, since a very fine grid is needed to capture all the 
heterogeneities in the physical properties and essential for obtaining accurate numerical solutions. 
Traditional numerical methods may then become prohibitively expensive and even unfeasible. 

To remedy this situation, the development of efficient computational multiscale methods 
for solving multiscale problems at reduced computational expense has been of great interest to various 
scientific and engineering disciplines. 
Existing approaches include numerical homogenization approaches \cite{weh02}, 
multiscale finite element methods (MsFEM) \cite{ch03,cgh09,eh09,ehw99,hw97}, 
variational multiscale methods (VMS) \cite{calo2011note,hfmq98,hughes2007variational,Iliev_MMS_11}, 
heterogeneous multiscale methods (HMM) \cite{abdulle05,ee03,emz05}, 
and generalized multiscale finite element methods (GMsFEM) 
\cite{chung2015generalizedwave,chung2016adaptive,chung2014adaptive,egh12}. 
The central idea of these multiscale methods is to construct coarse-scale numerical solvers 
which typically seek for solution on a coarse grid with much fewer degrees of freedom 
than the fine grid that is used to capture all the heterogeneities in the medium properties. 
In numerical homogenization approaches, effective properties are computed on the coarse grid 
and used to formulate the model problem and therefore the numerical solver. 
While these approaches are simple, they are limited to the assumption that the multiple scales in
the medium properties can be separated. 

Meanwhile, the goal of multiscale methods is to incorporate the fine-scale effects in 
the degrees of freedom used to formulate the global problem. 
It is therefore important to make sure the degrees of freedom are adequate for 
representability of solution in the multiscale media. 
Many multiscale methods in the literature, including MsFEM, VMS, and HMM, 
construct one degree of freedom for each coarse region to handle the effects of local heterogeneities. 
For numerical modeling of convection-dominated problems and wave propagations in heterogeneous media, 
multiscale methods in the DG framework have been investigated 
\cite{ehw99,buffa2006analysis,eglmsMSDG,AADA,
elfverson2013dg,efendiev2015spectral,chung2017dg,chung2018dg}. 
In these approaches, multiscale basis functions are in general discontinuous on the coarse grid, 
and stabilization or penalty terms are added to ensure well-posedness of the global problem. 

While these methods had drawn lots of attention and been successfully applied in various multiscale problems, 
multiple multiscale basis functions are necessary in order to accurately represent the local features of the solution 
for more complex multiscale problems in which each local coarse region contains several high-conductivity regions.
GMsFEM employs the idea of model reduction to extract local dominant modes and 
identify the underlying low-dimensional local structures for solution representation in each coarse region. 
This allows systematic enrichment of the coarse-scale space with fine-scale information. 
By including multiple degrees of freedom in each coarse region, 
the error of GMsFEM is related to the smallest eigenvalues which are excluded in the local spectral problems. 
For a more detailed discussion on GMsFEM, we refer the readers to 
\cite{chung2016adaptive,chung2015residual,chung2014adaptive,
egh12,eglp13oversampling,egw10} 
and the references therein. 

For classical numerical schemes, such as the finite element and finite difference methods,
the solution accuracy is subject to the convergence of the mesh size. 
Moreover, the convergence should be independent of these physical parameters. 
However, for multiscale problems, it is difficult to adjust coarse-grid mesh size based on scales and contrast, 
making deriving multiscale methods with convergence on coarse mesh size and independent of scales and contrast
a non-trivial task. Very recently, several multiscale methods with mesh convergence  
had been developed using localization techniques 
\cite{maalqvist2014localization, owhadi2017multigrid,owhadi2014polyharmonic}. 
This idea had been studied and extended to multiple degrees of freedom per coarse region for 
\cite{hou2017sparse,chung2018constraint,chung2018mixed,cheung2018mc}. 

Our work is built within the framework of a class of recently developed mutliscale methods, 
namely the constraint energy minimizing generalized multiscale finite element method (CEM-GMsFEM), 
which exhibits both coarse mesh convergence and spectral convergence. 
More precisely, CEM-GMsFEM is considered within a DG discretization setting \cite{cheung2020constraint} 
and extended to wave propagation in heterogeneous and high contrast media \cite{cheung2020wave}. 
Local spectral problems and constraint energy minimization problems are used to construct multiple 
multiscale DG basis functions per coarse region, which are then coupled to formulate a global coarse-scale system 
of equations using the interior penalty discontinuous Galerkin (IPDG) formulation. 
This solution scheme is referred to the offline multiscale method in this paper and 
is used to initialize an adaptive solution procedure in an online stage. 
Iteratively, the multiscale solution and the data are used to compute the residual information
which suggests online basis functions to be included to enrich the multiscale space and improve the accuracy of the solution. 
We remark that the solution accuracy of CEM-GMsDGM is important as it is applied to 
multiscale convection-dominated problems and wave propagation in heterogeneous and high-contrast media. 

In this research, we develop and analyze an online enrichment strategy for CEM-GMsFEM within the DG setting. The strategy is based on the information of local residuals and the technique of oversampling, adopting the ideas presented in \cite{chung2018fast} with CG setting and in \cite{chung2019online} for mixed formulation. As a result, the corresponding online basis functions are supported in some oversampled regions. This construction differs from the previous online approach in \cite{chung2015residual} since CEM-GMsDGM makes use of the technique of oversampling. In particular, the online basis functions are formulated in the oversampled regions. 
We show that the convergence rate depends on the factor of exponential decay and a user-defined parameter of the online adaptive enrichment. 
One obtains accurate approximation in a few online iterations by choosing appropriate number of oversampling layers. 

The paper is organized as follows. In Section~\ref{sec:prelim}, we will introduce the notions of grids, 
and essential discretization details such as DG finite element spaces and IPDG formulation on the coarse grid. 
We will then briefly review the construction of offline multiscale space in Section~\ref{sec:offline_method}. 
The online adaptive method will be presented in Section~\ref{sec:online_method} 
and analyzed in Section~\ref{sec:analysis}.
Numerical results will be provided in Section~\ref{sec:numerical} to demonstrate the effectiveness and efficiency of the proposed online adaptive algorithm.
Concluding remarks will be given in Section~\ref{sec:conclusion}.

\section{Preliminaries}
\label{sec:prelim}
We consider the following high-contrast flow problem: 
\begin{equation}
\begin{split}
-\nabla \cdot \left(\kappa \nabla u\right) &= f \quad \text{in} ~ \Omega,\\
u & = 0 \quad \text{on} ~ \partial \Omega.
\end{split}
\label{eq:elliptic}
\end{equation}
Here, the set $\Omega \subset \mathbb{R}^d$ ($ d\in \{ 2, 3 \}$) is a computational domain and 
$f \in L^2(\Omega)$ is a given source term. 
We assume that the permeability field $\kappa \in L^\infty (\Omega) $ is highly heterogeneous 
such that there exist two constants $0 <  \kappa_0 \ll \kappa_1 < + \infty$ such that 
$\kappa_0 \leq \kappa(x) \leq \kappa_1$ for almost every $x \in \Omega$. 

Next, we introduce the notions of coarse and fine meshes. 
We start with a usual partition $\mathcal{T}^H$ of $\Omega$ into finite elements, 
which does not necessarily resolve any multiscale features. 
The partition $\mathcal{T}^H$ is called a coarse grid and 
a generic element $K$ in the partition $\mathcal{T}^H$ is called a coarse element. 
Moreover, $\displaystyle{H := \max_{K \in \mathcal{T}^H } \left ( \max_{x, y \in K} \abs{x-y} \right ) > 0}$ is called the coarse mesh size.
We let $N_c$ be the number of coarse grid nodes and 
$N$ be the number of coarse elements. 
We also denote $\mathcal{E}^H$ the collection of all coarse grid edges. 
We perform a refinement of $\mathcal{T}^H$ to obtain a fine grid $\mathcal{T}^h$, 
where $\displaystyle{h := \max_{\tau \in \mathcal{T}^h } \left ( \max_{x, y \in \tau} \abs{x-y} \right ) > 0}$ is the mesh size of the fine grid. 
It is assumed that the fine grid is sufficiently small to resolve the heterogeneities. 
An illustration of the fine and coarse grids and a coarse element is shown in Figure~\ref{fig:mesh}. 

\begin{figure}[ht!]
\centering
\includegraphics[width=0.45\linewidth]{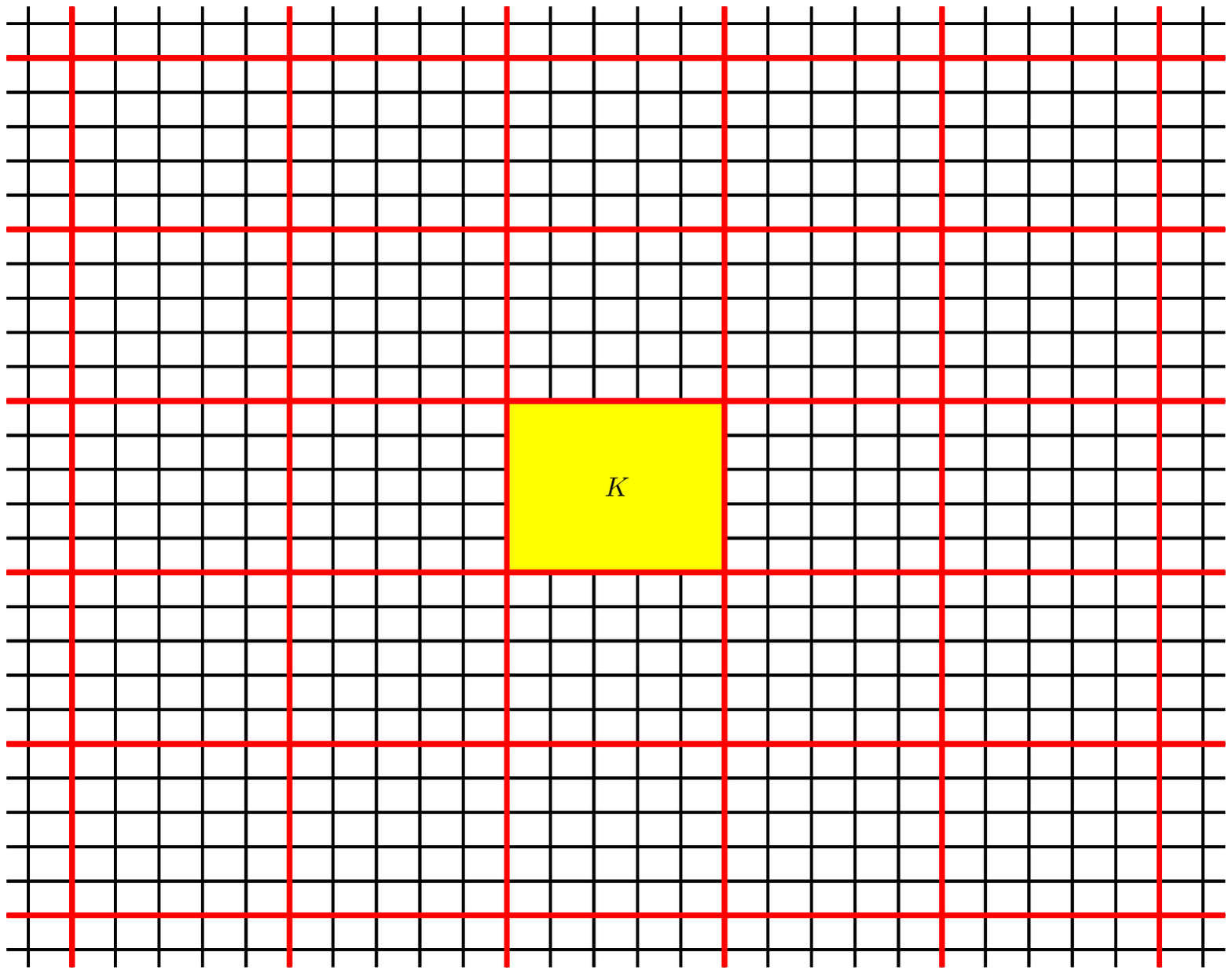}
\caption{An illustration of partition of computational domain.}
\label{fig:mesh}
\end{figure}

In this work, we consider the discontinuous Galerkin discretization 
and the interior penalty discontinuous Galerkin global formulation. 
For the $i$-th coarse block $K_i \in \mathcal{T}^H$, we denote $V(K_i)$ the restriction of the 
Sobolev space $V := H_0^1(\Omega)$ on $K_i$. Let $V_h(K_i)$ be 
the conforming bilinear elements defined on the fine grid $\mathcal{T}^h$ in $K_i$, i.e.
\begin{equation}
V_h(K_i) := \left\{ v \in V(K_i): v \vert_\tau \in \mathbb{Q}^1(\tau) \text{ for all } 
\tau \in \mathcal{T}^h \text{ and } \tau \subset K_i\right\}, 
\end{equation}
where $\mathbb{Q}^1(\tau)$ stands for the bilinear element on the fine grid block $\tau$. 
The DG approximation space is then given by the space of 
coarse-scale locally conforming piecewise bilinear 
fine-grid basis functions, namely 
\begin{equation}
V_h := \bigoplus_{i=1}^N V_h(K_i). 
\end{equation}
We remark that functions in $V_h$ are continuous within coarse blocks, 
but discontinuous across the coarse grid edges in general. 
Given a subdomain $\Omega' \subseteq \Omega$ formed by a union of coarse blocks $K \in \mathcal{T}^H$, 
we also define the local DG approximate space by 
$$ V_h(\Omega') := \bigoplus_{K_i \subset \Omega'} V_h(K_i).$$

The global formulation of IPDG method then reads:
find $u_h \in V_h$ such that
\begin{equation}
a_{\dg}\left(u_h ,w\right) = \int_{\Omega} fw \, dx \quad \text{ for all } w \in V_h, 
\label{eq:sol_dg}
\end{equation}
where the bilinear form $a_{\dg}$ is defined by:
\begin{equation}
\begin{split}
a_{\dg}\left(v,w\right) 
& := \sum_{K \in \mathcal{T}^H} \int_K \kappa \nabla v \cdot \nabla w \, dx 
- \sum_{E \in \mathcal{E}^H} \int_E \{ \kappa \nabla v \cdot n_E \} \llbracket w \rrbracket \, d\sigma \\
& \quad - \sum_{E \in \mathcal{E}^H} \int_E \{ \kappa \nabla w \cdot n_E \} \llbracket v \rrbracket \, d\sigma 
+ \dfrac{\gamma}{h} \sum_{E \in \mathcal{E}^H} \int_E \overline{\kappa} \llbracket v \rrbracket \llbracket w \rrbracket \, d\sigma. 
\end{split}
\label{eq:dg_bilinear}
\end{equation}
The scalar $\gamma > 0$ is a penalty parameter and 
$n_E$ is a fixed unit normal vector defined on the coarse edge $E \in \mathcal{E}^H$. 
Note that the average and the jump operators in \eqref{eq:dg_bilinear}
are defined in the classical way. 
Specifically, consider an interior coarse edge $E \in \mathcal{E}^H$ and 
let $K^+$ and $K^-$ be the two coarse grid blocks sharing the edge $E$, 
where the unit normal vector $n_E$ is pointing from $K^+$ to $K^-$. 
For a piecewise smooth function $G$ with respect to the coarse grid $\mathcal{T}^H$, we define
\begin{equation}
\{ G \}  := \dfrac{1}{2}\left(G^+ + G^-\right) \quad \text{and} \quad \llbracket G \rrbracket  := G^+ - G^-,
\end{equation}
where $G^+ := G\vert_{K^+}$ and $G^- := G\vert_{K^-}$. 
Moreover, we define 
$$\overline{\kappa} := \frac{1}{2} \left(\kappa_{K^+} + \kappa_{K^-} \right)$$
on the edge $E$,  
where $\kappa_{K^\pm}$ is the maximum value of $\kappa$ over $K^\pm$, respectively. 
For a coarse edge $E$ lying on the boundary $\partial \Omega$, we define
$\{G\}:=G$, $\llbracket  G \rrbracket :=G$, and $\overline{\kappa}:=\kappa_K$ on $E$, 
where we always assume that $n_E$ is pointing outside of the domain $\Omega$. 
We define the energy norm on the space of coarse-grid piecewise smooth functions by
\begin{equation}
\norm{w}_a := \sqrt{a_{\dg}(w,w)}.
\end{equation}
We also define the DG-norm by 
\begin{equation}
\norm{w}_{\dg} :=\left ( \sum_{K \in \mathcal{T}^H} \int_K \kappa \vert \nabla w \vert^2 \, dx 
+ \dfrac{\gamma}{h} \sum_{E \in \mathcal{E}^H} \int_E \overline{\kappa} \llbracket w \rrbracket^2 \, d\sigma \right  )^{1/2}.
\end{equation}
The two norms are equivalent on the subspace of coarse-grid piecewise bi-cubic polynomials, that is, 
there exists $C_0 \geq 1$ such that 
\begin{equation}
C_0^{-1} \| w \|_{a} \leq \| w \|_{\dg} \leq C_0 \| w \|_a.
\end{equation}
The continuity and coercivity results of the bilinear form $a_{\dg}(\cdot,\cdot)$ 
with respect to the DG-norm is ensured by a sufficiently large penalty parameter $\gamma$.
While the method works well for general highly heterogeneous field $\kappa$, 
we assume $\kappa$ is piecewise constant on the fine grid $\mathcal{T}^h$ 
for the sake of simplicity in our analysis. 

\section{Offline multiscale method}
\label{sec:offline_method}
In this section, we briefly present the construction of the multiscale basis functions. 
We use the concept of GMsFEM to construct our auxiliary multiscale basis functions 
on a generic coarse block $K$ in the coarse grid. 
We consider $V_h(K_i)$ as the snapshot space related to $K_i$ and 
we perform a dimension reduction through a spectral problem, 
which is to find a real number $\lambda_j^{\left(i\right)} \in \mathbb{R}$ 
and a function $\phi_j^{\left(i\right)} \in V_h(K_i)$ such that
\begin{equation}
a_i\left(\phi_j^{\left(i\right)}, w\right) = \lambda_j^{\left(i\right)} s_i\left(\phi_j^{\left(i\right)}, w\right) \quad \text{ for all } w \in V_h(K_i). 
\label{eq:spectral_prob}
\end{equation}
Here, $a_i(\cdot,\cdot)$ is a symmetric non-negative definite bilinear operator 
and $s_i(\cdot,\cdot)$ is a symmetric positive definite bilinear operator defined on $V_h(K_i) \times V_h(K_i)$. 
We remark that the above problem is solved on the fine mesh in actual computations. 
Based on our analysis, we can choose
\begin{equation}
\begin{split}
a_i\left(v,w\right) & = \int_{K_i} \kappa \nabla v \cdot \nabla w \, dx, \\
s_i\left(v,w\right) & = \int_{K_i} \tilde{\kappa} v w \, dx,
\end{split}
\label{eq:spectral_bilinear_form}
\end{equation}
where $\tilde{\kappa} := \sum_{j=1}^{N_c} \kappa \vert \nabla \chi_j \vert^2$ and 
$\{\chi_j \}_{j=1}^{N_c}$ is a set of partition of unity functions. 
Let $\{ \lambda_j^{\left(i\right)} \}$ be the set of eigenvalues of \eqref{eq:spectral_prob} 
arranged in ascending order in $j$. We use the first $L_i$ eigenfunctions, corresponding to the first smallest $L_i$ eigenvalues, 
to construct our local auxiliary multiscale space
$V_{\text{aux}}^{\left(i\right)} := \text{span} \{ \phi_j^{\left(i\right)}: 1 \leq j \leq L_i\}$. 
The global auxiliary multiscale space $V_{\text{aux}}$ is then defined as 
the direct sum of these local auxiliary multiscale spaces
\begin{equation}
V_{\text{aux}} = \bigoplus_{i=1}^N V_{\text{aux}}^{\left(i\right)}.
\end{equation}
The bilinear form $s_i(\cdot,\cdot)$ in \eqref{eq:spectral_bilinear_form} defines an inner product 
with the induced norm $\|v\|_{s(K_i)} := \sqrt{s_i \left(v,v\right)}$. 
These local inner products and norms provide natural definitions of inner
product and norm, 
which are defined by
\begin{equation}
\begin{split}
s\left(v,w\right) & := \sum_{i=1}^N s_i\left(v,w\right),\\ 
\norm{v}_s & := \sqrt{s\left(v,v\right)}. 
\end{split}
\end{equation}
Before we move on to discuss the construction of multiscale basis functions, 
we introduce some tools which will be used to describe our method and analyze the convergence.
We introduce a projection operator $\pi: V_h \to V_{\text{aux}}$ by 
$\pi := \sum_{i=1}^N \pi_i$, where
\begin{equation}
\pi_i(v) := \sum_{j=1}^{L_i} \dfrac{s_i\left(v,\phi_j^{\left(i\right)}\right)}{s_i\left(\phi_j^{\left(i\right)},\phi_j^{\left(i\right)}\right)} \phi_j^{\left(i\right)} \quad \text{ for all } v \in V_h, \text{ for all } i = 1,2,\cdots,N.
\end{equation}
We remark that due to the $s_i$-orthogonality of the auxiliary basis functions, the projection operator satisfies the following inequality
\begin{eqnarray}
\norm{(1 - \pi)v}_{s}^2 \leq \Lambda^{-1} \sum_{i=1}^N a_i(v,v)
\label{eqn:lambda-1}
\end{eqnarray}
for any $v \in V_h$, where $\Lambda := \min_{1 \leq i \leq N} \lambda_{L_i +1}^{(i)}$. 

Next, we present the construction of the (offline) multiscale basis functions in $V_h$. 
For each auxiliary function $\phi_j^{(i)}$, we construct a multiscale basis function $\psi_{j,\text{ms}}^{(i)}$ whose support is $K_i^+ = K_{i,m}$, where 
$$ K_{i,m} := \left \{ \begin{array}{ll}
K_i & \text{if} ~ m = 0, \\
\bigcup \{ K : K \cap K_{i,m-1} \neq \emptyset \} & \text{if} ~ m \geq 1,
\end{array} \right .$$
for any nonnegative integer $m \in \mathbb{N}$. 
The multiscale basis function $\psi_{j,\text{ms}}^{(i)} \in V_{h}(K_i^+)$ is defined to be the solution of the following system: 
\begin{equation}
a_{\dg}\left(\psi_{j,\text{ms}}^{\left(i\right)}, \psi\right) + s\left(\pi\left(\psi_{j,\text{ms}}^{\left(i\right)}\right), \pi\left(\psi\right)\right) = s\left(\phi_j^{\left(i\right)},\pi\left(\psi\right)\right) \quad \text{ for all } \psi \in V_h\left(K_{i}^+\right).
\label{eq:var2}
\end{equation}
We remark that the authors in \cite{cheung2020constraint} define the multiscale basis functions as 
minimizers of constraint energy minimization problem accompanying with the $s_i$-orthogonality of auxiliary functions. 
In the contrary, we use the modified definition \eqref{eq:var2} of multiscale basis functions with the 
so-called relaxed formulation (see \cite[Section 6]{chung2018constraint} for the result within continuous Galerkin setting). 
It is important to note that the multiscale basis functions are localized in the sense that they are supported 
in an oversampled region $K_{i,m}$ and approximate the corresponding 
global basis function which exhibits a property of exponential decay. 
As suggested by our analysis, the localization provides the same convergence rate with respect to the coarse mesh size 
and can be solved with reduced expense thanks to the localized support. 
With the multiscale basis functions constructed, we define the multiscale DG finite element space as 
\begin{equation}
V_{\text{off}} := \text{span}  \{\psi_{j,\text{ms}}^{\left(i\right)} : 1 \leq j \leq L_i, 1 \leq i \leq N \},
\label{eqn:local_ms_space}
\end{equation}
which is a subspace of $V_h$. 
After the multiscale DG finite element space is constructed, 
the offline multiscale solution $u_{\text{off}}$ is given by: find $u_{\text{off}} \in V_{\text{off}}$ such that
\begin{equation}
a_{\dg}\left(u_{\text{off}},w\right) = \int_{\Omega} fw \, dx \quad \text{ for all } w \in V_{\text{off}}. 
\label{eq:sol_ms}
\end{equation}

\section{Online adaptive algorithm}
\label{sec:online_method}
In this section, we develop an online adaptive algorithm for the CEM-GMsDGM. We first present the construction of the online basis function. Based on this construction, we propose an online adaptive algorithm. 

\subsection{Online basis functions}
We present the construction of the online basis function. First, we define the residual functional $r: V_h \to \mathbb{R}$. Let $u_{\text{app}} \in V_h$ be a numerical approximation. The residual functional is defined to be 
\begin{equation}
r(v) := a_{\dg}(u_{\text{app}}, v) - \int_\Omega fv \, dx \quad \text{ for all } v \in V_h.
\label{eqn:glo_residual}
\end{equation}
We also consider local residuals. For each coarse node $x_i$, we define a coarse neighborhood $\omega_i := \bigcup \{ K: x_i \in K, ~ K \in \mathcal{T}^H \}$. For each coarse neighborhood $\omega_i$, we define the local residual functional $r_i: V_h \to \mathbb{R}$ such that 
\begin{equation}
r_i(v) := a_{\dg}(u_{\text{app}}, \chi_i v) - \int_\Omega f \chi_i v \, dx \quad \text{ for all } v \in V_h.
\label{eqn:loc_residual}
\end{equation}
The set of functions $\{ \chi_i \}_{i=1}^{N_c}$ forms a partition of unity with respect to the coarse grid. We remark that one can take $\{ \chi_i \}_{i=1}^{N_c}$ to be the set of  standard multiscale basis functions or the standard piecewise linear functions. 

Next, we define the online basis function. The construction of the online basis function is related to the local residual. 
For any coarse neighborhood $\omega_i$ related to the coarse node $x_i$, we define $\omega_i^+ = \omega_{i,m}$ such that 
$$ \omega_{i,m} := \left \{ \begin{array}{ll}
\omega_i & m = 0, \\
\bigcup \{ K: \omega_{i,m-1} \cap K \neq \emptyset \} & m \geq 1,
\end{array} \right .$$
for any nonnegative integer $m \in \mathbb{N}$. We denote $m$ the number of oversampling layers. 
Using the local residual, one can define the online basis function $\beta_{\text{on}}^{(i)} \in V_h (\omega_i^+)$ whose support is the oversampled region $\omega_i^+$. 
More precisely, the online basis function $\beta_{\text{on}}^{(i)} \in V_h(\omega_i^+)$ is defined to be the solution of the following cell problem: 
\begin{equation}
a_{\dg} (\beta_{\text{on}}^{(i)}, v) + s(\pi (\beta_{\text{on}}^{(i)}), \pi(v)) = r_i(v) \quad \text{ for all } v \in V_h(\omega_i^+).
\label{eqn:loc_online_basis}
\end{equation}

\subsection{Online adaptive enrichment}
In this section, we present an online adaptive algorithm with enrichment of online basis functions defined in the previous section. Once the online basis functions are constructed, we include those newly constructed functions into the multiscale space. With this enriched space, we can compute a new numerical solution by solving the equation \eqref{eq:sol_ms}. One can repeat the process to enrich the multiscale space until the residual norm is smaller than a prescribed tolerance. 

First, we set the initial multiscale space to be $V_{\ms}^{(0)} := V_{\text{off}}$, where $V_{\text{off}}$ is defined in \eqref{eqn:local_ms_space}. 
Next, we choose a parameter $\theta \in [0, 1)$, where it determines the number of online basis functions that are included in the space during each iteration. 
The online adaptive algorithm sequentially defines residual functionals by taking $u_{\text{app}} = u_{\ms}^{(k)}$ in \eqref{eqn:glo_residual} and \eqref{eqn:loc_residual} with $u_{\ms}^{(k)}$ being the solution of \eqref{eq:sol_ms} over the multiscale space $V_{\ms}^{(k)}$. That is, we define $r^k$ the global residual operator at $k$-th level of enrichment such that 
$$ r^k(v) := a_{\dg} (u_{\ms}^{(k)}, v) - \int_\Omega fv ~ dx \quad \tforall v \in V_h.$$
This online adaptive method enriches the multiscale space 
$V_{\ms}^{(k)} \subset V_{\ms}^{(k+1)}$ by adding online basis functions in \eqref{eqn:loc_online_basis}, and 
generates an updated multiscale solutions $u_{\ms}^{(k+1)}$ in $V_{\ms}^{(k+1)}$ by Galerkin projection. 
The complete procedure of the online adaptive algorithm is listed in Algorithm \ref{algo:online}. 

\begin{algorithm}[ht]
	\caption{Online adaptive enrichment algorithm}
	\begin{algorithmic}[1]
	\STATE {\bf Input:} A given source function $f\in L^2(\Omega)$, a parameter $\theta \in [0,1)$, a set of numbers $\{ L_i \}_{i=1}^N$, a number of oversampling layers $m$, and a number of iterations $\texttt{NIter} >0$. 
	\STATE Construct the offline space $V_{\text{off}}$. 
	\STATE Set $k = 0$, $V_{\ms}^{(k)} = V_{\text{off}}$, and $u_{\ms}^{(k)} = u_{\text{off}}$ obtained in \eqref{eq:sol_ms}. 
	\FOR{$k =0$ to \texttt{NIter}-1}
		\STATE For each $i \in \mathcal\{ 1, \cdots, N_c \}$, compute $\delta_i^k := \norm{z_i^k}_{a^*}$, where 
		$$z_i^k(v) := a_{\dg}(u_{\ms}^{(k)}, v) - \int_\Omega fv \; dx \quad \text{ for all } v \in V_h(\omega_i), \quad \norm{z_i^k}_{a^*} := \sup_{v \in V_h(\omega_i)} \frac{\abs{r^k(v)}}{\norm{v}_a}.$$
		\STATE Enumerate the indices of $\omega_i$ such that $\delta_1^k \geq \delta_2^k \geq \cdots \geq \delta_{N_c}^k$. 
		\STATE Find the smallest integer $p = p(k)\in \mathbb{N}$ such that 
		\begin{equation}
		\sum_{i=p+1}^{N_c} (\delta_i^k)^2 < \theta \sum_{i=1}^{N_c} (\delta_i^k)^2.
		\label{eqn:theta_percent}
		\end{equation}
		\STATE For each $i \in \{1,\cdots, p\}$, compute the online basis function $\beta_{\text{on}}^{(i,k)}$ by solving 
		$$a_{\dg} (\beta_{\text{on}}^{(i,k)}, v) + s(\pi (\beta_{\text{on}}^{(i,k)}), \pi(v)) = r_i^k(v) \quad \text{ for all } v \in V_h(\omega_i^+)$$
		with $r_i^k(v) := a_{\dg}(u_{\ms}^{(k)}, \chi_i v) - \int_\Omega f \chi_i v \; dx$ for all $v \in V_h(\omega_i)$. 
		\STATE Set $V_{\ms}^{(k+1)} = V_{\ms}^{(k)} \bigoplus \spa\left \{ \beta_{\text{on}}^{(i,k)} \right \}_{i=1}^p$. 
		\STATE Solve $u_{\ms}^{(k+1)} \in V_{\ms}^{(k+1)}$ such that 
		\begin{equation}
		a_{\dg}\left(u_{\text{ms}}^{(k+1)},w\right) = \int_{\Omega} fw \, dx \quad \text{ for all } w \in V_{\text{ms}}^{(k+1)}. 
		\end{equation}
	\ENDFOR
	\STATE {\bf Output:} A sequence of multiscale solutions $\left \{ u_{\ms}^{(k)} \right \}_{k=0}^{\texttt{NIter}}$. 
	\end{algorithmic}
	\label{algo:online}
\end{algorithm}

\section{Error analysis}\label{sec:analysis}
In this section, we analyze the convergence rate of the proposed online adaptive algorithm. First, we need to introduce some notations. 
Given a subdomain $\Omega' \subseteq \Omega$ formed by a union of coarse blocks $K \in \mathcal{T}^H$, 
we define the local $s$-norm by 
$$\norm{w}_{s\left(\Omega'\right)} 
:= \left ( \sum_{K \subseteq \Omega'} \int_K \tilde{\kappa} \vert w \vert^2 \, dx \right )^{1/2}.$$ 

Next, we recall some theoretical results in \cite{cheung2020constraint}. 
For any coarse grid block $K$,  we define a bubble function $B$ on $K$ such that 
$B(x) = 0$ for all $x \in \partial K$ and $B(x) > 0$ for all $x \in \text{int}\left(K\right)$. 
More precisely, we take $B =  \prod_j \chi_j^{\ms}$, where the product is taken 
over all the coarse grid nodes lying on the boundary $\partial K$. 
We define a constant $C_{\pi}$ such that 
$$
C_\pi := \sup_{K \in \mathcal{T}^H,~ \mu \in V_{\text{aux}}} 
\dfrac{\int_K \tilde{\kappa} \mu^2}{\int_K \tilde{\kappa} B \mu^2}.
$$
Furthermore, we will make use of the fine-scale Lagrange interpolation operator $I_h$ defined as 
$$I_h: C^0(\Omega) \cap H_0^1(\Omega) \to C^0(\Omega) \cap V_h$$ such that 
for all $u \in C^0(\Omega) \cap H_0^1(\Omega)$, 
the interpolant $I_h u \in C^0(\Omega) \cap V_h$ is a piecewise bilinear polynomial in each fine block $\tau \in \mathcal{T}^h$ given by
\begin{equation}
(I_h u)(x) := u(x) \quad \text{ for all vertices } x \in \tau,
\end{equation}
which satisfies the standard approximation properties: 
there exists $C_I \geq 1$ such that for any $u \in C^0(\Omega) \cap H_0^1(\Omega)$, 
\begin{eqnarray}
\begin{split}
\left\| \tilde{\kappa}^\frac{1}{2} (u - I_h u) \right\|_{L^2(\tau)} + h \left\| \kappa^\frac{1}{2} \nabla \left(u - I_h u\right) \right\|_{L^2(\tau)} &\leq C_{I} h \left\| \kappa^\frac{1}{2} \nabla u \right\|_{L^2(\tau)},\\
\norm{u - I_h u}_{L^2(e)} &\leq C_I h^2 \norm{u}_{L^2(e)},
\end{split}
\label{eq:interpolation}
\end{eqnarray}
on each fine edge $e \subset \partial \tau$ and each fine block $\tau \in \mathcal{T}^h$. 
We assume that 
the following smallness criterion on the fine mesh size $h$ holds; that is, we have 
\begin{equation}
C_\pi C_I (C_\mathcal{T}^2 + \lambda_{\max}) \| \Theta \|_{L^\infty(\Omega)}^\frac{1}{2} h < 1,
\label{eq:fine-small}
\end{equation}
where $C_\mathcal{T}$ is the maximum number of vertices over all coarse elements $K \in \mathcal{T}^H$ and
\begin{equation}
\lambda_{\max} := \max_{1 \leq i \leq N} \lambda_{L_i}^{(i)}, \quad 
\Theta := \sum_{j=1}^{N_c} \vert \nabla \chi_j \vert^2. 
\end{equation}

We first recall the following theoretical result from \cite{cheung2020constraint} that is useful for our analysis of online adaptive method. 
\begin{lemma}[Lemma 2 in \cite{cheung2020constraint}]
\label{lemma:1-on}
Assume the following smallness criterion \eqref{eq:fine-small} holds. 
For any $v_{\text{aux}} \in V_{\text{aux}}$, there exists a function $v \in C^0(\Omega) \cap V_h$ such that 
\begin{equation}
\pi(v) = v_{\text{aux}}, \quad 
\| v \|_a^2 \leq D \| v_{\text{aux}} \|_s^2, \quad
\text{supp}(v) \subseteq \text{supp}(v_{\text{aux}}),
\end{equation}
where the constant $D$ is defined by 
\begin{equation}
D := \left(\dfrac{2C_\pi(1+C_I^2) \left( C_\mathcal{T}^2 + \lambda_{\max} \right)}
{1 - C_\pi C_I \left( C_\mathcal{T}^2+ \lambda_{\max} \right) \| \Theta\|_{L^\infty(K_i)}^\frac{1}{2} h}\right)^2.
\end{equation}
\end{lemma}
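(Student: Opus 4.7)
The plan is to construct the lift $v$ block by block. Write $v_{\text{aux}} = \sum_{i=1}^N v_{\text{aux}}^{(i)}$ with $v_{\text{aux}}^{(i)} \in V_{\text{aux}}^{(i)}$, and seek $v = \sum_i v^{(i)}$ where each $v^{(i)}$ is supported in $K_i$ and vanishes on $\partial K_i$. This guarantees that $v$ is globally continuous (so $v \in C^0(\Omega) \cap V_h$), that $\text{supp}(v) \subseteq \text{supp}(v_{\text{aux}})$ (as $v^{(i)} = 0$ whenever $v_{\text{aux}}^{(i)} = 0$), and that $\pi(v) = \sum_i \pi_i(v^{(i)})$ localizes the constraint to each $K_i$.

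The candidate is $v^{(i)} := I_h(B\mu_i)$ for some $\mu_i \in V_{\text{aux}}^{(i)}$ chosen so that $\pi_i(I_h(B\mu_i)) = v_{\text{aux}}^{(i)}$; because $B$ vanishes on $\partial K$ and $I_h$ preserves zero boundary values of a continuous function, $v^{(i)} \in V_h \cap H_0^1(K_i)$. The next step is to show the linear map $T_i: V_{\text{aux}}^{(i)} \to V_{\text{aux}}^{(i)}$ defined by $T_i(\mu) := \pi_i(I_h(B\mu))$ is invertible. Writing $T_i(\mu) = \pi_i(B\mu) + \pi_i(I_h(B\mu) - B\mu)$, one uses the definition of $C_\pi$ to bound the unperturbed piece from below in $s_i$-norm (since $s_i(\pi_i(B\mu),\mu) = s_i(B\mu,\mu) = \int_{K_i}\tilde\kappa B\mu^2 \geq C_\pi^{-1} \|\mu\|_{s(K_i)}^2$), and the interpolation estimate \eqref{eq:interpolation} to show the perturbation is small provided \eqref{eq:fine-small} holds. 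A Neumann-series argument then yields invertibility of $T_i$ with an operator norm bounded by $(1 - C_\pi C_I(C_{\mathcal{T}}^2 + \lambda_{\max}) \|\Theta\|_{L^\infty}^{1/2} h)^{-1}$, giving exactly the denominator appearing in $D$.

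With $\mu_i := T_i^{-1}(v_{\text{aux}}^{(i)})$ in hand, the final task is to estimate $\|v^{(i)}\|_{a(K_i)}$. I would write $\|I_h(B\mu_i)\|_{a(K_i)} \leq \|B\mu_i\|_{a(K_i)} + \|B\mu_i - I_h(B\mu_i)\|_{a(K_i)}$, bound the interpolation error via \eqref{eq:interpolation}, and estimate $\|B\mu_i\|_{a(K_i)}$ by expanding $\nabla(B\mu_i) = \mu_i \nabla B + B\nabla \mu_i$: the first term contributes $C_\mathcal{T}^2$ through $\|\nabla B\|_{L^\infty}^2$ and the definition of $\Theta$, while $\|\kappa^{1/2}\nabla \mu_i\|_{L^2(K_i)}^2 \leq \lambda_{\max} \|\mu_i\|_{s(K_i)}^2$ by the spectral problem \eqref{eq:spectral_prob}. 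Combining these with the bound on $\|\mu_i\|_{s(K_i)}$ obtained from Step~2, squaring, and summing over $i$ (using the $s_i$-orthogonality of the $V_{\text{aux}}^{(i)}$'s so that $\sum_i \|v_{\text{aux}}^{(i)}\|_{s(K_i)}^2 = \|v_{\text{aux}}\|_s^2$), and using the norm equivalence constant to pass between $\|\cdot\|_a$ and $\|\cdot\|_{\dg}$ on the continuous functions $v^{(i)}$ (whose jumps vanish), gives the asserted $\|v\|_a^2 \leq D \|v_{\text{aux}}\|_s^2$.

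The main obstacle will be bookkeeping the sharp constant $D$: extracting the $(C_\mathcal{T}^2 + \lambda_{\max})$ factor in the numerator requires a careful product-rule decomposition of $\nabla(B\mu_i)$, and the denominator is only recovered if the Neumann series is tracked with the correct constants $C_\pi, C_I, \|\Theta\|_{L^\infty}^{1/2}$. Everything else, including the localization and the continuity of $v$, is essentially a structural consequence of using the bubble $B$ and of $\pi$ being block-diagonal across the coarse blocks.
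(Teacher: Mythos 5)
This lemma is not proved in the paper itself (it is imported verbatim from Lemma 2 of \cite{cheung2020constraint}), and your construction is essentially the argument used there: a bubble-function lift $I_h(B\mu_i)$ on each coarse block, invertibility of $\mu \mapsto \pi_i(I_h(B\mu))$ via the $C_\pi$ lower bound plus the interpolation estimate \eqref{eq:interpolation} under the smallness criterion \eqref{eq:fine-small}, and the energy bound from the product rule together with $a_i(\mu,\mu) \leq \lambda_{\max}\,\|\mu\|_{s(K_i)}^2$, with blockwise summation by $s$-orthogonality. Your outline is correct and matches that proof; the only work left is the constant bookkeeping you already identified.
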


Throughout this section, to simplify notations, we write $a \lesssim b$ if there exists a generic constant $C$ such that $a \leq Cb$. 
The first part of our analysis is devoted to provide an error estimate for the offline coarse-scale IPDG scheme \eqref{eq:sol_ms}. 
To this end, we will justify the construction of local multiscale basis function defined in \eqref{eq:var2}. 
As we will show, the local multiscale basis function is an approximation the corresponding global basis function 
defined as follows: find $\psi_j^{(i)} \in V_h$ such that 
\begin{equation}
a_{\dg}\left(\psi_{j}^{\left(i\right)}, \psi\right) + s\left(\pi\left(\psi_{j}^{\left(i\right)}\right), \pi\left(\psi\right)\right) = s\left(\phi_j^{\left(i\right)},\pi\left(\psi\right)\right) \quad \text{ for all } \psi \in V_h.
\label{eq:var2_glo}
\end{equation}
We define the global multiscale space as $V_{\text{glo}} := \text{span} \{ \psi_j^{(i)}: 1 \leq j \leq L_i,  1 \leq i \leq N \}$. The global basis functions have a property of exponential decay and it motivates the localization and the use of the localized multiscale basis functions $\psi_{j,\text{ms}}^{(i)}$. 
We denote $\tilde V_h$ the kernel of the operator $\pi$. We remark that for any $\psi_j^{(i)} \in V_{\glo}^{(i)}$, we have 
$$ a_{\dg} (\psi_j^{(i)} , v) = 0 \quad \tforall v\in \tilde V_h,$$
which implies $\tilde V_h \subset V_{\glo}^{\perp_a}$, where $V_{\glo}^{\perp_a}$ is the orthogonal complement of $V_{\glo}$ with respect to the bilinear form $a_{\dg} (\cdot,\cdot)$. Moreover, since the dimension of the multiscale space $V_{\glo}$ is equal to that of the auxiliary space $V_{\text{aux}}$, we have $\tilde V_h = V_{\glo}^{\perp_a}$ and thus $V_h = V_{\glo} \oplus \tilde V_h$. 
The following result indicates that the global basis function defined in \eqref{eq:var2_glo} has a property of exponential decay outside an oversampled region. 
This result motivates the construction of local multiscale basis function defined in \eqref{eq:var2}. Furthermore, sufficiently many auxiliary basis functions should be included in order to ensure a fast exponential decay. 
The proof of this lemma is given in Appendix \ref{sec:appen}. 

\begin{lemma} 
\label{lemma:2-on}
Let $m \geq 2$ be an integer. Denote $K_i^+ = K_{i,m}$ an oversampled region extended from 
each coarse grid block $K_i \in \mathcal{T}^H$. 
Let $\psi_j^{\left(i\right)} \in V_{\text{glo}}$ be the global multiscale basis function 
obtained from \eqref{eq:var2_glo}, and 
$\psi_{j,\ms}^{\left(i\right)} \in V_h\left(K_{i,m}\right)$ be the localized multiscale basis function 
obtained from \eqref{eq:var2}.
Then, there exists a generic constant $C_g >0$ independent of the coarse mesh size $H$ and $\kappa_1$ such that 
\begin{equation}
\norm{\psi_j^{\left(i\right)} - \psi_{j,\ms}^{\left(i\right)}}_a^2 + \norm{\pi (\psi_j^{\left(i\right)} - \psi_{j,\ms}^{\left(i\right)})}_s^2  \leq C_g E_m \left (  \norm{\psi_j^{\left(i\right)}}_{a}^2 + \norm{\pi (\psi_j^{(i)})}_s^2 \right ),
\end{equation}
where $E_m :=  (1+\Lambda^{-1}) \left(1 +(1+\Lambda^{-1} )^{-1} \right)^{1-m}$ is the factor of exponential decay corresponding to the number $m$ of oversampling layers, and $\Lambda = \min_{1 \leq i \leq N} \lambda^{(i)}_{L_i +1}$. 
\end{lemma}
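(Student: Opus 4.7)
The plan is to adapt the standard exponential-decay argument from \cite{cheung2020constraint} to the DG setting. Setting $\eta := \psi_j^{(i)} - \psi_{j,\ms}^{(i)}$ and subtracting \eqref{eq:var2} from \eqref{eq:var2_glo} — using that every $v \in V_h(K_{i,m})$ extends by zero to an element of $V_h$ — I obtain the Galerkin orthogonality
\begin{equation*}
a_{\dg}(\eta, v) + s(\pi(\eta), \pi(v)) = 0 \tforall v \in V_h(K_{i,m}).
\end{equation*}
I would also record the global a-priori bound $\|\eta\|_a^2 + \|\pi(\eta)\|_s^2 \lesssim \|\psi_j^{(i)}\|_a^2 + \|\pi(\psi_j^{(i)})\|_s^2$, obtained by testing the equations for $\psi_j^{(i)}$ and $\psi_{j,\ms}^{(i)}$ with themselves and using coercivity of $a_{\dg}$ together with $s$-positivity.

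Next, I would introduce a hierarchy of cutoffs $\chi^{M,k}$ built from the partition of unity $\{\chi_j\}_{j=1}^{N_c}$, equal to $1$ on $\Omega \setminus K_{i,M}$, vanishing on $K_{i,k}$, and in $[0,1]$ on the annulus $K_{i,M} \setminus K_{i,k}$. Because each $\chi_j$ is continuous across coarse edges, so is $\chi^{M,k}$, and multiplication by it introduces no spurious jumps in $a_{\dg}$. The strategy is then: take a test function supported outside $K_{i,k}$ whose $\pi$-image equals $\pi(\eta)$ on the appropriate annulus — constructed from $\eta$ via Lemma \ref{lemma:1-on} — use the orthogonality to annihilate interior contributions, and control the remainder by the annular energy of $\eta$. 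The cost of moving the $\kappa^{1/2}\nabla \chi^{M,k}$ term past the fine-scale inverse estimate is precisely what the smallness criterion \eqref{eq:fine-small} is designed to absorb.

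The spectral ingredient is to trade an $s$-norm over the annulus for an $a$-norm via \eqref{eqn:lambda-1}, paying the factor $\Lambda^{-1}$. Combining Galerkin orthogonality, the cutoff manipulation, the continuity of $a_{\dg}$, and \eqref{eqn:lambda-1}, I expect a recurrence of the form
\begin{equation*}
\| \eta \|_{a, \Omega \setminus K_{i,k-1}}^2 + \| \pi(\eta) \|_{s, \Omega \setminus K_{i,k-1}}^2
\;\geq\; \bigl(1 + (1+\Lambda^{-1})^{-1}\bigr) \Bigl( \| \eta \|_{a, \Omega \setminus K_{i,k}}^2 + \| \pi(\eta) \|_{s, \Omega \setminus K_{i,k}}^2 \Bigr)
\end{equation*}
for $1 \leq k \leq m-1$. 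Iterating this $m-1$ times and closing the base case by bounding $\| \eta \|_a^2 + \| \pi(\eta) \|_s^2$ through the global stability estimate (with an extra factor $(1+\Lambda^{-1})$ coming from the final $s$-to-$a$ exchange on the innermost annulus) yields precisely $E_m = (1+\Lambda^{-1})(1 + (1+\Lambda^{-1})^{-1})^{1-m}$, as required.

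The main obstacle is the DG bookkeeping: the cutoff step must produce remainder terms cleanly bounded by the $a$-norm on the outgoing annulus without losing the right constants to jump and penalty contributions across coarse edges. My plan is to always use cutoffs that are continuous across coarse edges (so that jump terms from the product rule vanish identically), to exploit the norm equivalence $C_0^{-1}\|\cdot\|_{\dg} \leq \|\cdot\|_a \leq C_0 \|\cdot\|_{\dg}$ in the unavoidable norm changes, and to invoke Lemma \ref{lemma:1-on} to reconstruct a test function whose $\pi$-image is $\pi(\eta)$ with support strictly confined to the target annular region. This is how the exponential factor $E_m$ comes out with a constant $C_g$ that is independent of both $H$ and $\kappa_1$.
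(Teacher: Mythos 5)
Your opening moves match the paper: the Galerkin orthogonality $a_{\dg}(\eta,v)+s(\pi(\eta),\pi(v))=0$ for $v\in V_h(K_{i,m})$ and the use of annular cutoffs built from the partition of unity are both there. But the core of your plan applies the Caccioppoli-type iteration to the wrong object. You propose a geometric-decay recurrence for the tails of the error $\eta=\psi_j^{(i)}-\psi_{j,\ms}^{(i)}$ over $\Omega\setminus K_{i,k}$ and then "close" with the global stability bound $\|\eta\|_a^2+\|\pi(\eta)\|_s^2\lesssim\|\psi_j^{(i)}\|_a^2+\|\pi(\psi_j^{(i)})\|_s^2$. Even if that recurrence held, iterating it only shows that the \emph{far-field tail} of $\eta$ is of size $E_m$ times something; it says nothing about the error on $K_{i,m-1}$, which is part of what the lemma bounds, and the stability estimate you close with carries no smallness at all, so no factor $E_m$ ever attaches to the full norm $\|\eta\|_a^2+\|\pi(\eta)\|_s^2$. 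Moreover, the recurrence itself is doubtful as stated: $\eta$ satisfies the homogeneous equation only against test functions supported \emph{inside} $K_{i,m}$, so a cutoff test function "supported outside $K_{i,k}$" picks up uncontrolled jump/penalty contributions from $\psi_{j,\ms}^{(i)}$ along $\partial K_{i,m}$, and the error has no reason to decay away from $K_i$ rather than away from the truncation boundary.

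Two ingredients of the paper's argument are missing and are exactly what repairs this. First, the Galerkin orthogonality is used not to iterate on $\eta$ but to get a best-approximation (C\'ea-type) bound: $\|\eta\|_a^2+\|\pi(\eta)\|_s^2\le\|\psi_j^{(i)}-w\|_a^2+\|\pi(\psi_j^{(i)}-w)\|_s^2$ for every $w\in V_h(K_{i,m})$, and the candidate $w=I_h(\chi_i^{m,m-1}\psi_j^{(i)})$ localizes the \emph{total} error to the tail quantity $\|\psi_j^{(i)}\|_{\dg(\Omega\setminus K_{i,m-1})}^2+\|\pi(\psi_j^{(i)})\|_{s(\Omega\setminus K_{i,m-1})}^2$ (up to $(1+\Lambda^{-1})$ and interpolation constants). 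Second, the exponential decay is then proved for the \emph{global} basis function $\psi_j^{(i)}$, not for $\eta$: testing \eqref{eq:var2_glo} with $I_h(\xi^2\psi_j^{(i)})$, $\xi=1-\chi_i^{m-1,m-2}$, the right-hand side $s(\phi_j^{(i)},\pi(I_h(\xi^2\psi_j^{(i)})))$ vanishes because $\phi_j^{(i)}$ and $\xi$ have disjoint supports; this is the engine that gives the recursive inequality between consecutive tails of $\psi_j^{(i)}$ and hence the factor $\left(1+(1+\Lambda^{-1})^{-1}\right)^{1-m}$. Your proposal never exploits this support property of the data, and it invokes Lemma \ref{lemma:1-on} (which plays no role in this lemma's proof; it enters only in the estimate of $\mathcal{J}_3$ in Theorem \ref{thm:main}) in a vague construction that the orthogonality you have cannot support. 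As written, the argument does not prove the stated estimate.
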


Next, we will need the following lemma in our analysis. The proof of this result is given in Appendix \ref{sec:appen}. 
\begin{lemma}
\label{lemma:3-on}
Assume that the same conditions in Lemma \ref{lemma:2-on} hold. Then, we have 
\begin{equation}
\begin{split}
& \norm{\sum_{i=1}^N \sum_{j=1}^{L_i} c_j^{(i)}  ( \psi_j^{(i)} - \psi_{j,\ms}^{(i)} ) }_a^2 + \norm{\sum_{i=1}^N \sum_{j=1}^{L_i} c_j^{(i)}  \pi ( \psi_j^{(i)} - \psi_{j,\ms}^{(i)} ) }_s^2 \\
& \lesssim (1+\Lambda^{-1}) \sum_{i=1}^N \left (\norm{ \sum_{j=1}^{L_i} c_j^{(i)}  ( \psi_j^{(i)} - \psi_{j,\ms}^{(i)} ) }_a^2 + \norm{\sum_{j=1}^{L_i} c_j^{(i)} \pi  ( \psi_j^{(i)} - \psi_{j,\ms}^{(i)} )}_s^2 \right )
\end{split}
\end{equation}
for any set of numbers $\left \{ c_j^{(i)} \right \}$. 
\end{lemma}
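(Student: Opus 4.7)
The plan is to work with the combined bilinear form $B(u,v) := a_{\dg}(u,v) + s(\pi u, \pi v)$, so that the left-hand side of the lemma equals $B(\eta,\eta)$, where $\eta_i := \sum_{j=1}^{L_i} c_j^{(i)}(\psi_j^{(i)} - \psi_{j,\ms}^{(i)})$ and $\eta := \sum_{i=1}^N \eta_i$, while the right-hand side is $(1+\Lambda^{-1})\sum_i B(\eta_i,\eta_i)$ up to the hidden constant in $\lesssim$. The first move is to subtract the local variational problem \eqref{eq:var2} (whose test space is $V_h(K_i^+)$) from its global counterpart \eqref{eq:var2_glo} and take the $c_j^{(i)}$-linear combination. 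This yields the crucial Galerkin orthogonality
\[
B(\eta_i, v) = 0 \quad \text{for all } v \in V_h(K_i^+).
\]

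The next step is to control $B(\eta,\eta) = \sum_i B(\eta_i,\eta)$. For each $i$, the orthogonality above allows the substitution $B(\eta_i,\eta) = B(\eta_i, \eta - v_i)$ for any $v_i \in V_h(K_i^+)$. I would pick $v_i$ as a localization of $\eta$ to $K_i^+$, either by restriction to the oversampled patch or through a partition-of-unity-style construction subordinate to $\{K_i^+\}$, so that $\eta - v_i$ is well-controlled outside $K_i^+$. Applying Cauchy--Schwarz in the $B$-inner product and summing over $i$ reduces the problem to: (i) bounding $\sum_i B(\eta - v_i, \eta - v_i)$ by $\sum_i B(\eta_i, \eta_i)$ up to a controlled constant, and (ii) absorbing any leftover $s$-norm contributions by means of the projection inequality \eqref{eqn:lambda-1}.

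The $(1+\Lambda^{-1})$ factor enters precisely at step (ii): for any $w \in V_h$, \eqref{eqn:lambda-1} gives $\|(1-\pi)w\|_s^2 \leq \Lambda^{-1} \sum_K a_K(w,w)$, the standard CEM-GMsFEM mechanism that trades $s$-norms of $(1-\pi)$-components for $a$-norms at this precise cost. The bounded overlap of the oversampled patches $\{K_i^+\}$, which depends only on $m$, gets absorbed into the hidden constant. The principal obstacle is step (i): since each $\eta_i$ is globally supported and only $B$-orthogonal against $V_h(K_i^+)$, a clean decomposition of $\eta$ across the patches is not immediate, and a careful choice of $v_i$ is required to avoid a combinatorial factor scaling with $N$. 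Lemma~\ref{lemma:2-on} may be needed here to handle the tails of $\eta_i$ outside $K_i^+$, allowing the finite-overlap bound on the $v_i$'s to carry over to the sum.
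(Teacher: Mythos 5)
Your setup — the combined form $B(u,v):=a_{\dg}(u,v)+s(\pi(u),\pi(v))$, the functions $\eta_i=\sum_{j}c_j^{(i)}(\psi_j^{(i)}-\psi_{j,\ms}^{(i)})$ and $\eta=\sum_i\eta_i$, Cauchy--Schwarz in $B$, the projection inequality \eqref{eqn:lambda-1} as the source of $(1+\Lambda^{-1})$, and finite overlap of the oversampled patches — matches the paper's. But the orthogonality you extract, $B(\eta_i,v)=0$ for all $v\in V_h(K_i^+)$ (this is the one used in the proof of Lemma \ref{lemma:2-on}), is not the one that closes this argument, and the obstacle you flag at step (i) is exactly where your route breaks down. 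If $v_i\in V_h(K_i^+)$ is any localization of $\eta$ to $K_i^+$, then $\eta-v_i$ is supported on essentially all of $\Omega\setminus K_{i,m}$, so $B(\eta-v_i,\eta-v_i)$ is of the size of $B(\eta,\eta)$ for every $i$, and summing over $i$ produces a factor of order $N$. Your suggested repair, invoking Lemma \ref{lemma:2-on} to control the tails of $\eta_i$, cannot deliver the stated right-hand side either: that lemma bounds $\eta_i$ from above by $E_m$ times norms of the global functions $\psi_j^{(i)}$, so any bound obtained this way is expressed through $E_m$ and the coefficients $c_j^{(i)}$, not through $\sum_i\bigl(\norm{\eta_i}_a^2+\norm{\pi(\eta_i)}_s^2\bigr)$ as the lemma requires. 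As written, the proposal is therefore incomplete at its decisive step.

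The paper's proof uses the complementary, far-field orthogonality instead. Test the global problem \eqref{eq:var2_glo} with $v=I_h\bigl((1-\chi_i^{m+1,m})\eta\bigr)$: the right-hand side $s(\phi_j^{(i)},\pi(v))$ vanishes because $v\equiv 0$ on $K_i$ where $\phi_j^{(i)}$ lives, and this $v$ also does not interact with $\sum_j c_j^{(i)}\psi_{j,\ms}^{(i)}$, since the latter is supported in $K_{i,m}$ while $v$ vanishes there. Subtracting the two identities gives $B\bigl(\eta_i, I_h((1-\chi_i^{m+1,m})\eta)\bigr)=0$, hence $B(\eta,\eta_i)=B\bigl(I_h(\chi_i^{m+1,m}\eta),\eta_i\bigr)$: the globally supported $\eta$ is replaced in the pairing by its cutoff near $K_i$, supported in $K_{i,m+1}$. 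Now Cauchy--Schwarz in $B$, the cutoff estimate $\norm{I_h(\chi_i^{m+1,m}\eta)}_a^2+\norm{\pi(I_h(\chi_i^{m+1,m}\eta))}_s^2\lesssim(1+\Lambda^{-1})\bigl(\norm{\eta}_{\dg(K_{i,m+1})}^2+\norm{\pi(\eta)}_{s(K_{i,m+1})}^2\bigr)$ (the same cutoff-plus-interpolation argument as in the proof of Lemma \ref{lemma:2-on}, where \eqref{eqn:lambda-1} supplies the $(1+\Lambda^{-1})$), and the bounded overlap of the regions $K_{i,m+1}$ yield $B(\eta,\eta)\lesssim(1+\Lambda^{-1})^{1/2}B(\eta,\eta)^{1/2}\bigl(\sum_i B(\eta_i,\eta_i)\bigr)^{1/2}$, which is the claim after dividing. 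The idea you were missing is that no decay of $\eta_i$ is used at all: the gain comes from the orthogonality against test functions supported away from the patch — a consequence of the vanishing right-hand side of the global problem there and of the support of $\psi_{j,\ms}^{(i)}$ — not from the near-field orthogonality on $V_h(K_i^+)$.
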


The remaining of this section is devoted to provide an error estimate for the 
online adaptive algorithm in Algorithm \ref{algo:online}. 
Similar to the multiscale basis functions in \eqref{eq:var2}, 
the online basis function in \eqref{eqn:loc_online_basis} is a localization 
for the corresponding global online basis function: find $\beta_{\text{glo}}^{(i)} \in V_h$ such that 
\begin{equation}
a_{\dg} (\beta_{\text{glo}}^{(i)}, v) + s(\pi (\beta_{\text{glo}}^{(i)}), \pi(v)) = r_i(v) \quad \text{ for all } v \in V_h.
\label{eqn:glo_online_basis}
\end{equation}
As stated in the following lemma, similar localization results holds for the online basis functions. 
The proof is the same as that of Lemma \ref{lemma:2-on} and 
Lemma \ref{lemma:3-on} and is therefore omitted. 

\begin{lemma}
\label{lemma:4-on}
Assume that the same conditions in Lemma \ref{lemma:2-on} hold.
Let $\beta_{\text{on}}^{(i)}$ be the online basis functions defined in \eqref{eqn:loc_online_basis} and $\beta_{\text{glo}}^{(i)}$ be the global online basis functions defined in \eqref{eqn:glo_online_basis}. Then, we have 
\begin{equation}
\norm{\beta_{\text{glo}}^{\left(i\right)} - \beta_{\text{on}}^{\left(i\right)}}_a^2 + \norm{\pi (\beta_{\text{glo}}^{\left(i\right)} - \beta_{\text{on}}^{\left(i\right)})}_s^2  \lesssim E_m \left (  \norm{\beta_{\text{glo}}^{\left(i\right)}}_{a}^2 + \norm{\pi (\beta_{\text{glo}}^{(i)})}_s^2 \right ),
\end{equation}
where $E_m$ is the factor of exponential decay in Lemma \ref{lemma:2-on}. Furthermore, we have 
\begin{equation}
\begin{split}
& \norm{\sum_{i=1}^N \sum_{j=1}^{L_i} c_j^{(i)}  ( \beta_{\text{glo}}^{(i)} - \beta_{\text{on}}^{(i)} ) }_a^2 + \norm{\sum_{i=1}^N \sum_{j=1}^{L_i} c_j^{(i)}  \pi ( \beta_{\text{glo}}^{(i)} - \beta_{\text{on}}^{(i)} ) }_s^2 \\
& \lesssim (1+\Lambda^{-1}) \sum_{i=1}^N \left (\norm{ \sum_{j=1}^{L_i} c_j^{(i)}  ( \beta_{\text{glo}}^{(i)} - \beta_{\text{on}}^{(i)} ) }_a^2 + \norm{\sum_{j=1}^{L_i} c_j^{(i)} \pi  ( \beta_{\text{glo}}^{(i)} - \beta_{\text{on}}^{(i)} )}_s^2 \right )
\end{split}
\end{equation}
for any set of numbers $\left \{ c_j^{(i)} \right \}$. 
\end{lemma}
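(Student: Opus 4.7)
\medskip

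The plan is to mirror the proofs of Lemma \ref{lemma:2-on} and Lemma \ref{lemma:3-on}, observing that the only change is the right-hand side of the defining variational problem. The equations for $\beta_{\text{glo}}^{(i)}$ and $\beta_{\text{on}}^{(i)}$ have precisely the same operator on the left, namely $a_{\dg}(\cdot,v)+s(\pi(\cdot),\pi(v))$, and differ from the multiscale basis equations \eqref{eq:var2_glo} and \eqref{eq:var2} only in that the linear functional on the right is $r_i$ rather than $s(\phi_j^{(i)},\pi(\cdot))$. The Caccioppoli-type iteration used in Lemma \ref{lemma:2-on} uses only three properties of the right-hand side: linearity, boundedness, and locality of its support. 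The functional $r_i$ inherits linearity and boundedness trivially, and since $r_i(v)=a_{\dg}(u_{\text{app}},\chi_i v)-\int_\Omega f\chi_i v\,dx$ contains the cutoff $\chi_i$, its support is contained in $\omega_i$, a bounded union of coarse blocks. This is the analogue of the one-block support of $s(\phi_j^{(i)},\pi(\cdot))$ used before.

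Concretely, I would first establish the exponential decay estimate. Introduce a partition-of-unity cutoff $\chi_i^{M,m}$ which vanishes on $\omega_{i,m-1}$ and equals one outside $\omega_{i,M}$, and use the admissible test function provided by Lemma \ref{lemma:1-on} inserted into the difference equation satisfied by $\beta_{\text{glo}}^{(i)}-\beta_{\text{on}}^{(i)}$. Away from $\omega_i$ the right-hand side vanishes, so one obtains a Galerkin-orthogonality relation that, combined with the spectral bound \eqref{eqn:lambda-1} (the source of the $\Lambda^{-1}$ factor), yields the layer-by-layer recursion $\|\beta_{\text{glo}}^{(i)}-\beta_{\text{on}}^{(i)}\|_{a,\Omega\setminus\omega_{i,m}}^2+\|\pi(\cdot)\|_{s,\Omega\setminus\omega_{i,m}}^2\leq (1+(1+\Lambda^{-1})^{-1})^{-1}(\|\cdot\|_{a,\Omega\setminus\omega_{i,m-1}}^2+\|\pi(\cdot)\|_{s,\Omega\setminus\omega_{i,m-1}}^2)$. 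Iterating this inequality $m-1$ times and controlling the initial term by $\|\beta_{\text{glo}}^{(i)}\|_a^2+\|\pi(\beta_{\text{glo}}^{(i)})\|_s^2$ delivers the claimed $E_m$-bound.

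For the sum estimate, I would proceed as in Lemma \ref{lemma:3-on}. The key observation is that each error $\beta_{\text{glo}}^{(i)}-\beta_{\text{on}}^{(i)}$, although globally supported, has most of its energy concentrated in $\omega_i^+$ by the decay just established. Using a suitable cutoff argument together with the finite overlap property of the coarse neighborhoods $\{\omega_i\}_{i=1}^{N_c}$, and once again invoking the spectral bound \eqref{eqn:lambda-1} to pass between $s$- and $a$-norms on the tails, one converts the norm of the sum into a bounded multiple of the sum of norms, the multiplicative constant being proportional to $(1+\Lambda^{-1})$ times a geometric overlap factor absorbed into the generic constant.

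The main obstacle is bookkeeping rather than new ideas. The online basis functions live on coarse neighborhoods $\omega_i$ (vertex-based patches) rather than on single coarse blocks $K_i$ (as in the offline case), so the cutoff functions and the indexing of oversampled layers $\omega_{i,m}$ must be set up correctly, and one must verify that shifting from element patches to vertex patches changes only the generic constants absorbed by $\lesssim$, not the exponential decay rate $E_m$ or its dependence on $\Lambda$. Once that is checked, every step in the proofs of Lemma \ref{lemma:2-on} and Lemma \ref{lemma:3-on} transfers verbatim.
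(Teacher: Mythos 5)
Your proposal is correct and coincides with the paper's intent: the paper omits the proof of Lemma \ref{lemma:4-on} precisely because it is the same Caccioppoli-type cutoff/iteration argument as Lemmas \ref{lemma:2-on} and \ref{lemma:3-on}, with the only change being that the right-hand side functional is $r_i$ (supported in $\omega_i$) instead of $s(\phi_j^{(i)},\pi(\cdot))$ and the patches are the vertex neighborhoods $\omega_{i,m}$ instead of $K_{i,m}$. Your identification of the three properties actually used (linearity, boundedness, local support of the right-hand side) and of the unchanged contraction factor giving $E_m$ is exactly the justification the paper relies on.
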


We are going to present the main result for the online adaptive enrichment algorithm. 
Define a constant $C_{\text{poin}} \in \mathbb{R}$ such that 
\begin{equation}
C_{\text{poin}} := \sup_{v \in V_h} \frac{\norm{\pi(v)}_s}{\norm{v}_a}.
\label{eqn:poin-const}
\end{equation}
It is remarkable that $C_{\text{poin}}^2 \leq \max \{\tilde \kappa\} C_p^2$, where $C_p$ is the Poincar\'{e} constant defined by $\norm{w}_{L^2(\Omega)} \leq C_p \norm{\nabla w}_{L^2(\Omega)}$ for $w \in V_h$. 

\begin{theorem}
\label{thm:main}
Let $u_h$ be the solution of \eqref{eq:sol_dg} and let $ \{ u_{\ms}^{(k)} \}_{k=0}^\infty$ be the sequence of multiscale solutions obtained by the online adaptive enrichment algorithm. Then, we have the following error estimate: 
$$ \norm{u_h - u_{\ms}^{(k+1)} }_a^2 \lesssim (1+\Lambda^{-1})  \left [ E_m \left ( (1+D) (1+C_{\text{poin}}^2)(m+1)^d + M \right ) + M^2 \theta \right ] 
\norm{u_h - u_{\ms}^{(k)}}_a^2$$
for any integer $k \geq 0$. 
Here, $E_m$ is the factor exponential decay in Lemma \ref{lemma:2-on}, $D$ is the number given in Lemma \ref{lemma:1-on}, the number $M =\max \{ M_1, M_2\}$, where $M_1:= \max_{K \in \mathcal{T}^H} n_K$ with $n_K$ being the number of coarse nodes of the coarse element $K$, $M_2 := \max_{E \in \mathcal{E}^H} n_E$ with $n_E$ being the number of coarse neighborhoods consisting of $E$, $C_{\text{poin}}$ is the constant defined in \eqref{eqn:poin-const}, and $\theta$ is an user-defined parameter in the online adaptive enrichment algorithm. 
\end{theorem}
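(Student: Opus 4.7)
The plan is to use Galerkin orthogonality in the enriched space $V_{\ms}^{(k+1)}$ with competitor $u_{\ms}^{(k)} + \sum_{i=1}^{p}\beta_{\text{on}}^{(i,k)}$, and then split the resulting upper bound into a ``global'' contribution that contracts by the marking parameter $\theta$ and a ``localization'' contribution that decays via Lemma \ref{lemma:4-on}. Writing $e^{(k)} := u_h - u_{\ms}^{(k)}$, the starting point after a triangle inequality is
\[
\norm{u_h - u_{\ms}^{(k+1)}}_a^2 \le 2\,\norm{e^{(k)} - \textstyle\sum_{i=1}^{p}\beta_{\text{glo}}^{(i,k)}}_a^2 + 2\,\norm{\textstyle\sum_{i=1}^{p}\bigl(\beta_{\text{glo}}^{(i,k)} - \beta_{\text{on}}^{(i,k)}\bigr)}_a^2.
\]

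For the localization term, Lemma \ref{lemma:4-on} bounds it by $(1+\Lambda^{-1})E_m \sum_{i=1}^{p}\bigl(\norm{\beta_{\text{glo}}^{(i,k)}}_a^2 + \norm{\pi(\beta_{\text{glo}}^{(i,k)})}_s^2\bigr)$. Testing \eqref{eqn:glo_online_basis} with $\beta_{\text{glo}}^{(i,k)}$ identifies the bracket with $r_i^k(\beta_{\text{glo}}^{(i,k)})$, and I would control this by combining the partition-of-unity structure of the $\chi_i$'s and the continuity of $a_{\dg}$ (with the Poincar\'e-type constant $C_{\text{poin}}$ in \eqref{eqn:poin-const} converting the $\norm{\pi(\cdot)}_s$-component into an $\norm{\cdot}_a$-component) with Lemma \ref{lemma:1-on} (which lifts an auxiliary bound back to $V_h$ at the cost of the constant $D$). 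Summing over $i$ introduces the overlap factor $(m+1)^d$ coming from the finite overlap of the oversampled regions $\omega_i^+$, producing the first bracket in the claimed estimate.

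For the global term $\norm{e^{(k)} - \sum_{i=1}^{p}\beta_{\text{glo}}^{(i,k)}}_a^2$ I would expand via $a_{\dg}(e^{(k)},v) = -r^k(v)$ together with \eqref{eqn:glo_online_basis}, use the partition-of-unity identity $\sum_{i=1}^{N_c}r_i^k = r^k$ to reduce to the unmarked indices $i = p+1,\ldots,N_c$, and apply the dual-norm bound $|r_i^k(v)| \le \delta_i^k \norm{v}_a$. The marking rule \eqref{eqn:theta_percent} bounds $\sum_{i=p+1}^{N_c}(\delta_i^k)^2$ by $\theta \sum_{i=1}^{N_c}(\delta_i^k)^2$, while the relation $r^k(v) = -a_{\dg}(e^{(k)},v)$ combined with the finite overlap of coarse neighborhoods (captured by $M$) yields $\sum_{i=1}^{N_c}(\delta_i^k)^2 \lesssim M\,\norm{e^{(k)}}_a^2$; one additional power of $M$ enters when absorbing the auxiliary coupling $s(\pi(\beta_{\text{glo}}^{(i,k)}),\pi(v))$ on the left-hand side of \eqref{eqn:glo_online_basis}, producing the $M^2\theta$ contribution.

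The main obstacle is precisely the bookkeeping in this last step: one has to juggle simultaneously the partition-of-unity cancellation, the $s\circ\pi$ coupling intrinsic to the relaxed CEM formulation, the marking inequality \eqref{eqn:theta_percent}, and the overlap counting, so that the constant multiplying $\theta\norm{e^{(k)}}_a^2$ comes out as $M^2$ (independent of $N_c$) and so that the localization and marking bounds assemble into the claimed two-term estimate without introducing hidden contrast-dependent factors.
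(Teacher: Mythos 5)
Your overall skeleton (quasi-optimality in $V_{\ms}^{(k+1)}$, then a split into a $\theta$-controlled residual part and an $E_m$-controlled localization part) matches the paper, but your competitor $u_{\ms}^{(k)}+\sum_{i\le p}\beta_{\on}^{(i,k)}$ is missing the essential offline correction, and this is a genuine gap. The paper takes $w=u_{\ms}^{(k)}+\eta_{\ms}-\sum_{i\in\mathcal{I}_k}\beta_{\on}^{(i,k)}$, where $\eta_{\ms}\in V_{\text{off}}\subseteq V_{\ms}^{(k+1)}$ is the localized version of $\eta:=u_h-u_{\ms}^{(k)}+\zeta$ with $\zeta:=\sum_{i=1}^{N_c}\beta_{\glo}^{(i,k)}$; the key structural fact is that $a_{\dg}(\eta,v)=-s(\pi(\zeta),\pi(v))=0$ for all $v\in\tilde V_h=V_{\glo}^{\perp_a}$, so $\eta\in V_{\glo}$ and can be expanded in the global offline basis, $\eta=\sum_{i,j}c_j^{(i)}\psi_j^{(i)}$. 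With your competitor, the ``global term'' is (after fixing the sign so that the marked residuals cancel --- note that with $+\sum\beta_{\on}^{(i,k)}$ the partition-of-unity identity gives $-\sum_{i>p}r_i^k-2\sum_{i\le p}r_i^k$, not a reduction to unmarked indices) equal to $\eta-\sum_{i>p}\beta_{\glo}^{(i,k)}$. The second piece is indeed $\theta$-small, but $\eta$ is not: it is the $V_{\glo}$-component of the error and is generically of the same order as $\norm{u_h-u_{\ms}^{(k)}}_a$. Equivalently, at the residual level your expansion leaves the coupling $\sum_{i\le p}s(\pi(\beta_{\glo}^{(i,k)}),\pi(v))$ from \eqref{eqn:glo_online_basis}, which involves the \emph{marked} (largest) residuals and carries no factor of $\theta$ or $E_m$; it cannot be ``absorbed at the cost of a power of $M$'' without destroying the contraction. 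The only way the paper removes this $O(1)$ piece is by approximating $\eta$ inside the space by $\eta_{\ms}$, which your decomposition never does.

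As a consequence, the first bracket in the estimate is misattributed in your plan. In the paper, $(1+D)(1+C_{\text{poin}}^2)(m+1)^dE_m$ does not come from the online-basis localization: that term ($\mathcal{J}_2$) contributes only an $E_m$ factor, via Lemma \ref{lemma:4-on} combined with the bound $\sum_{i=1}^{N_c}\bigl(\norm{\beta_{\glo}^{(i,k)}}_a^2+\norm{\pi(\beta_{\glo}^{(i,k)})}_s^2\bigr)\lesssim(1+\Lambda^{-1})\norm{u_h-u_{\ms}^{(k)}}_a^2$. The $(1+D)(1+C_{\text{poin}}^2)(m+1)^dE_m$ factor comes from $\mathcal{J}_3=\norm{\sum_{i,j}c_j^{(i)}(\psi_j^{(i)}-\psi_{j,\ms}^{(i)})}_a^2$, i.e.\ the localization error of the \emph{offline} basis incurred by replacing $\eta$ with $\eta_{\ms}$, where the coefficient bound $\sum_{i,j}(c_j^{(i)})^2\lesssim(1+D)(1+C_{\text{poin}}^2)\norm{u_h-u_{\ms}^{(k)}}_a^2$ uses Lemma \ref{lemma:1-on} (the lifting with constant $D$) and \eqref{eqn:poin-const}, and Lemmas \ref{lemma:2-on}--\ref{lemma:3-on} supply $E_m(1+\Lambda^{-1})(m+1)^d$. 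Your treatment of the unmarked residuals (local dual norms, the marking rule \eqref{eqn:theta_percent}, overlap counting yielding $M^2\theta(1+\Lambda^{-1})$) does match the paper's $\mathcal{J}_1$ estimate; but to make the proof work you must insert the $V_{\glo}$-decomposition step $u_h-u_{\ms}^{(k)}+\zeta\in V_{\glo}$ and add its localized representative $\eta_{\ms}$ to the competitor.
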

\begin{proof}
The proof of the convergence analysis is similar to that of the one in the CG setting \cite{chung2018fast}. 
While one has to make use of the localization estimate for the relaxed version of CEM-DG basis functions stated in Lemma \ref{lemma:2-on} to show the desired convergence estimate. 

By Ce\'{a}'s Lemma, we have 
\begin{eqnarray}
\norm{u_h - u_{\ms}^{(k+1)}}_a \lesssim \norm{u_h - w}_a
\label{eqn:main-3}
\end{eqnarray}
for any $w \in V_{\ms}^{(k+1)}$. We would find an appropriate candidate $w \in V_{\ms}^{(k+1)}$ and estimate the term in right-hand side. 
To this aim, we define global online basis function: find $\beta_{\text{glo}}^{(i,k)} \in V_h$ such that 
$$ a_{\dg} ( \beta_{\glo}^{(i,k)} , v) + s(\pi (\beta_{\glo}^{(i,k)}), \pi(v)) = r_i^k (v) \quad \tforall v \in V_h.$$
Note that 
\begin{eqnarray*}
\begin{split}
\sum_{i=1}^{N_c} r_i^k (v) &= \sum_{i=1}^{N_c} \left ( a_{\dg} (u_{\ms}^{(k)}, \chi_i v) - \int_\Omega f \chi_i v ~ dx \right ) = \sum_{i=1}^{N_c} a_{\dg} (u_{\ms}^{(k)} - u_h, \chi_i v)  \\
& =  a_{\dg} ( u_{\ms}^{(k)} - u_h, v) 
\end{split}
\end{eqnarray*}
for any $v \in V_h$. Denote $\zeta := \sum_{i=1}^{N_c} \beta_{\glo}^{(i,k)}$. Then, we have 
\begin{equation}
a_{\dg}(\zeta, v) + s (\pi(\zeta), \pi(v)) = \sum_{i=1}^{N_c} r_i^k(v) = a_{\dg} (u_{\ms}^{(k)} - u_h, v)
\label{eqn:main-00}
\end{equation}
for any $v \in V_h$. Note that 
\begin{eqnarray}
\begin{split}
a_{\dg} (\zeta, \zeta) + s(\pi(\zeta),\pi(\zeta)) & \leq \norm{ u_h - u_{\ms}^{(k)}}_a \cdot \norm{\zeta}_a \\
&\leq \norm{ u_h - u_{\ms}^{(k)}}_a \left [ a_{\dg} (\zeta, \zeta) + s(\pi(\zeta),\pi(\zeta)) \right ]^{1/2}  \\
\implies \norm{\zeta}_a^2& \leq \norm{\zeta}_a^2 + \norm{\pi(\zeta)}_s^2 \leq \norm{ u_h - u_{\ms}^{(k)}}_a ^2.
\end{split}
\label{eqn:main-0}
\end{eqnarray}
by taking $v = \zeta$ in \eqref{eqn:main-00}. 
Moreover, since $\pi (v) = 0 $ for all $v \in \tilde V_h$, from \eqref{eqn:main-00} we have 
$$ a_{\dg} (u_h - u_{\ms}^{(k)} + \zeta, v)  = 0 \quad \tforall v \in \tilde V_h.$$
Thus, we have $u_h - u_{\ms}^{(k)} + \zeta \in \left ( \tilde V_h \right )^{\perp_a} = V_{\glo}$ and there exists a set of numbers $\{ c_j^{(i)} \}$ such that 
\begin{eqnarray}
\eta:= u_h - u_{\ms}^{(k)} + \zeta = \sum_{i=1}^N \sum_{j=1}^{L_i} c_j^{(i)} \psi_j^{(i)}.
\label{eqn:cij}
\end{eqnarray}
Let $\mathcal{I}_k$ be the set of indices for the $k$-th level of online adaptive enrichment. 
For the adaptive algorithm, we add the local online basis function $\beta_{\on}^{(i,k)}$ for $i \in \mathcal{I}_k$. We take $w$ in \eqref{eqn:main-3} such that 
$$ w = u_{\ms}^{(k)}  + \eta_{\ms} - \sum_{i \in \mathcal{I}_k} \beta_{\on}^{(i,k)} \in V_{\ms}^{(k+1)}, \quad \text{where} \quad \eta_{\ms} := \sum_{i=1}^N \sum_{j=1}^{L_i} c_j^{(i)} \psi_{j,\ms}^{(i)}.$$
Then, we have 
\begin{eqnarray}
\begin{split}
\norm{u_h - u_{\ms}^{(k+1)}}_a^2 & \leq \norm{u_h - u_{\ms}^{(k)} - \eta_{\ms} + \sum_{i \in \mathcal{I}_k} \beta_{\on}^{(i,k)}}_a^2 \\
& \lesssim \norm{u_h  - u_{\ms}^{(k)} - \eta + \sum_{i \in \mathcal{I}_k} \beta_{\on}^{(i,k)}}_a^2 + \norm{\sum_{i=1}^N \sum_{j=1}^{L_i} c_j^{(i)} ( \psi_j^{(i)} - \psi_{j,\ms}^{(i)})}_a^2  \\
& = \norm{- \sum_{i\notin \mathcal{I}_k} \beta_{\glo}^{(i,k)} + \sum_{i \in \mathcal{I}_k} (\beta_{\on}^{(i,k)} - \beta_{\glo}^{(i,k)})}_a^2 + \norm{\sum_{i=1}^N \sum_{j=1}^{L_i} c_j^{(i)} ( \psi_j^{(i)} - \psi_{j,\ms}^{(i)})}_a^2  \\
& \lesssim \underbrace{\norm{\sum_{i\notin \mathcal{I}_k} \beta_{\glo}^{(i,k)}}_a^2}_{=: \mathcal{J}_1}
+ \underbrace{\norm{\sum_{i \in \mathcal{I}_k} (\beta_{\on}^{(i,k)} - \beta_{\glo}^{(i,k)})}_a^2}_{=: \mathcal{J}_2} 
+ \underbrace{\norm{\sum_{i=1}^N \sum_{j=1}^{L_i} c_j^{(i)} ( \psi_j^{(i)} - \psi_{j,\ms}^{(i)})}_a^2}_{=:\mathcal{J}_3}. 
\end{split}
\label{eqn:j1j2j3}
\end{eqnarray}
We first estimate the term $\mathcal{J}_1$. Denote $ p := \sum_{i \notin \mathcal{I}_k} \beta_{\glo}^{(i,k)}$. 
Note that $$\nabla (\chi_i p ) = ( \nabla \chi_i) p  + \chi_i (\nabla p)$$ and $\abs{\chi_i}\leq 1$. Then, we have 
\begin{eqnarray}
\begin{split}
\norm{\chi_i p }_a^2& \lesssim \norm{\chi_i p }_{\dg}^2 \lesssim \norm{p}_{\dg}^2 + \norm{p}_{s}^2 \lesssim (1+\Lambda^{-1}) \left ( \norm{p}_a^2 + \norm{\pi(p)}_s^2 \right ).
\end{split}
\label{eqn:main-2-21-sp}
\end{eqnarray}
Using the definition \eqref{eqn:glo_online_basis} of global online basis functions, we have
\begin{eqnarray} 
a_{\dg}(p, v) + s(\pi(p) , \pi(v)) = \sum_{i \notin \mathcal{I}_k} r_i^k (v)  =  \sum_{i \notin \mathcal{I}_k} r^k (\chi_i v )\quad \tforall v \in V_h,
\label{eqn:main-4}
\end{eqnarray}
where $r^k$ denotes the global residual operator at $k$-th level of enrichment. 
Taking $v = p$ in \eqref{eqn:main-4}, we obtain 
\begin{eqnarray*}
\begin{split}
\norm{p}_a^2 + \norm{\pi(p)}_s^2 & =  \sum_{i \notin \mathcal{I}_k}r^k \left ( \chi_i p\right ) \leq  \sum_{i \notin \mathcal{I}_k} \left ( \sup_{v \in V_h(\omega_i)} \frac{\abs{r^k(v)}}{\norm{v}_a} \right ) \norm{\chi_i p}_a \\
& \lesssim (1+ \Lambda^{-1})^{1/2} \left ( \norm{p}_a^2 + \norm{\pi(p)}_s^2 \right )^{1/2} \sum_{i \notin \mathcal{I}_k} \norm{z_i^k}_{a^*} \\
& \leq M^{1/2} (1+ \Lambda^{-1})^{1/2} \left ( \norm{p}_{a}^2 + \norm{\pi(p)}_{s}^2 \right )^{1/2} \left ( \sum_{i \notin \mathcal{I}_k} \norm{z_i^k}_{a^*}^2 \right )^{1/2}.
\end{split}
\end{eqnarray*}
Then, we obtain 
\begin{eqnarray*}
\begin{split}
\mathcal{J}_1 & \lesssim M (1+ \Lambda^{-1}) \sum_{i \notin \mathcal{I}_k} \norm{z_i^k}_{a^*}^2 \leq M (1+ \Lambda^{-1}) \theta \sum_{i=1}^{N_c} \norm{z_i^k}_{a^*}^2,
\end{split}
\end{eqnarray*}
where $\theta$ is the user-defined parameter in the online adaptive algorithm. By definition of $z_i^k$, for any $v \in V_h(\omega_i)$, we have 
$$ z_i^k(v) = a_{\dg}(u_{\ms}^{(k)} - u_h, v) \leq \norm{\mathbf{1}_{\omega_i} (u_h - u_{\ms}^{(k)})}_{a} \norm {v}_a.$$
Thus, we have 
\begin{eqnarray}
\begin{split}
\mathcal{J}_1& \leq M (1+ \Lambda^{-1}) \theta \sum_{i=1}^{N_c} \norm{z_i^k}_{a^*}^2  \leq M (1+ \Lambda^{-1}) \theta \sum_{i=1}^{N_c}  \norm{\mathbf{1}_{\omega_i} (u_h - u_{\ms}^{(k)})}_{a}^2\\
& \lesssim M (1+ \Lambda^{-1}) \theta \sum_{i=1}^{N_c} \norm{\mathbf{1}_{\omega_i} (u_h - u_{\ms}^{(k)})}_{\dg}^2\\
& \leq M^2(1+ \Lambda^{-1}) \theta \norm{u_h - u_{\ms}^{(k)}}_{\dg}^2 \lesssim M^2(1+ \Lambda^{-1}) \theta \norm{u_h - u_{\ms}^{(k)}}_{a}^2
\end{split}
\label{eqn:j1}
\end{eqnarray}
Next, we estimate the terms $\mathcal{J}_2$ and $\mathcal{J}_3$.
Note that, by the definition of the global basis function \eqref{eq:var2_glo}, we have 
\begin{eqnarray}
a_{\dg} (\eta, v) + s(\pi(\eta),\pi(v)) = \sum_{i=1}^N \sum_{j=1}^{L_i} c_j^{(i)} s(\phi_j^{(i)}, \pi(v)) \quad \tforall v \in V_h.
\label{eqn:main-1}
\end{eqnarray}
We define $v_{\aux}^{(i)} := \sum_{j=1}^{L_i} c_j^{(i)} \phi_j^{(i)}$, where the coefficients $\{ c_j^{(i)} \}$ are defined in \eqref{eqn:cij}. Then, by the results of Lemma \ref{lemma:1-on}, there exists a function $v^{(i)} \in C^0(\Omega) \cap V_h$ such that $\pi(v^{(i)} ) = v_{\aux}^{(i)}$, $\text{supp}(v^{(i)}) \subset \text{supp}(v_{\aux}^{(i)}) = K_i$, and 
\begin{equation}
\norm{v^{(i)}}_a^2 \leq D \norm{v_{\aux}^{(i)}}_s^2.
\label{eqn:D-est}
\end{equation}
Taking $v = \sum_{i=1}^N v^{(i)}$ in \eqref{eqn:main-1}, we obtain 
\begin{eqnarray*}
\begin{split}
\sum_{i=1}^N \norm{v_{\aux}^{(i)}}_s^2 & = a_{\dg} \left ( \eta, \sum_{i=1}^N v^{(i)} \right )  + s\left (\pi (\eta), \sum_{i=1}^N \pi(v^{(i)}) \right ) \\
& \leq \norm{\eta}_{a} \cdot \norm{\sum_{i=1}^N v^{(i)}}_a + \norm{\pi(\eta)}_{s} \cdot \norm{\sum_{i=1}^N v_{\aux}^{(i)}}_s \\
& \leq \left ( \norm{\eta}_{a}^2 + \norm{\pi(\eta)}_{s}^2 \right )^{1/2} \left ( \norm{\sum_{i=1}^N v^{(i)}}_a^2 +  \norm{\sum_{i=1}^N v_{\aux}^{(i)}}_s^2 \right )^{1/2} \\
& \leq \left ( \norm{\eta}_{a}^2 + \norm{\pi(\eta)}_{s}^2 \right )^{1/2} \left ( \sum_{i=1}^N \norm{v^{(i)}}_a^2 + \sum_{i=1}^N \norm{v_{\aux}^{(i)}}_s^2 \right )^{1/2} \\
\end{split}
\end{eqnarray*}
since $v^{(i)} \in C^0(\Omega)$ and $\phi_j^{(i)}$ is $s$-orthogonal. 
Using the orthogonality of the eigenfunctions $\left \{ \phi_j^{(i)} \right \}$ again and invoking \eqref{eqn:D-est}, we obtain 
\begin{eqnarray*}
\begin{split}
\sum_{i=1}^N \sum_{j=1}^{L_i} \left ( c_j^{(i)} \right )^2 & = \sum_{i=1}^N \norm{v_{\aux}^{(i)}}_s^2 \leq (1+D) \left ( \norm{\eta}_a^2 + \norm{\pi(\eta)}_s^2 \right ) \leq (1+D) (1+C_{\text{poin}}^2) \norm{\eta}_a^2 \\
& \lesssim (1+D) ( 1+ C_{\text{poin}}^2) \left ( \norm{u_h - u_{\ms}^{(k)}}_a^2 + \norm{\zeta}_a^2 \right ) \\
& \lesssim (1+D) (1+C_{\text{poin}}^2) \norm{ u_h - u_{\ms}^{(k)}}_a^2.
\end{split}
\end{eqnarray*}
Here, we have used the inequality \eqref{eqn:main-0}. Note that $$ \norm{\psi_j^{\left(i\right)}}_{a}^2 + \norm{\pi (\psi_j^{(i)})}_s^2 \leq \norm{\phi_j^{(i)}}_s^2 = 1$$
for any index $i \in \{ 1, \cdots, N\}$ and $j \in \{ 1, \cdots, L_i \}$. 
By Lemmas \ref{lemma:2-on} and \ref{lemma:3-on}, we obtain 
\begin{eqnarray}
\begin{split}
\mathcal{J}_3 = \norm{\sum_{i=1}^N \sum_{j=1}^{L_i} c_j^{(i)} (\psi_j^{(i)} - \psi_{j,\ms}^{(i)})}_a^2 & \lesssim (1+\Lambda^{-1})(m+1)^d E_m  \sum_{i=1}^N \sum_{j=1}^{L_i} \left ( c_j^{(i)} \right )^2 \left (  \norm{\psi_j^{\left(i\right)}}_{a}^2 + \norm{\pi (\psi_j^{(i)})}_s^2 \right ) \\
& \leq  (1+\Lambda^{-1})(m+1)^d E_m  \sum_{i=1}^N \sum_{j=1}^{L_i} \left ( c_j^{(i)} \right )^2  \\
& \leq (1+D)(1+C_{\text{poin}}^2)(1+\Lambda^{-1}) (m+1)^d E_m \norm{u_h - u_{\ms}^{(k)}}_a^2.
\end{split}
\label{eqn:main-2-1}
\end{eqnarray}

\noindent
Next, we estimate the error of localization of online basis functions. 
Using the same argument as in \eqref{eqn:main-2-21-sp}, we have 
\begin{eqnarray}
\begin{split}
\norm{\chi_i \beta_{\glo}^{(i,k)} }_a^2
& \lesssim (1+\Lambda^{-1}) \left ( \norm{\beta_{\glo}^{(i,k)}}_a^2 + \norm{\pi(\beta_{\glo}^{(i,k)})}_s^2 \right ).
\end{split}
\label{eqn:main-2-21}
\end{eqnarray}
Using 
the definition \eqref{eqn:glo_online_basis} of the global online basis function and \eqref{eqn:main-2-21}, we have 
\begin{eqnarray}
\begin{split}
\sum_{i=1}^{N_c} \norm{\beta_{\glo}^{(i,k)}}_a^2 + \norm{\pi (\beta_{\glo}^{(i,k)})}_s^2 & = a_{\dg} \left ( u_{\ms}^{(k)} - u_h, \sum_{i=1}^{N_c}  \chi_i \beta_{\glo}^{(i,k)} \right ) \\
 & \leq \norm{ u_{\ms}^{(k)} - u_h }_a \norm{\sum_{i=1}^{N_c} \chi_i \beta_{\glo}^{(i,k)}}_a \\
 & \leq \norm{ u_{\ms}^{(k)} - u_h }_a \left ( \sum_{i=1}^{N_c} \norm{ \chi_i \beta_{\glo}^{(i,k)}}_a \right ) \\ 
 & \lesssim (1+\Lambda^{-1})^{1/2} \norm{ u_{\ms}^{(k)} - u_h }_a \left ( \sum_{i=1}^{N_c} \norm{\beta_{\glo}^{(i,k)}}_a^2 + \norm{\pi(\beta_{\glo}^{(i,k)})}_s^2 \right )^{1/2}, 
\end{split}
\label{eqn:main-2-2}
\end{eqnarray}
which implies 
$$ \sum_{i=1}^{N_c} \norm{\beta_{\glo}^{(i,k)}}_a^2 + \norm{\pi (\beta_{\glo}^{(i,k)})}_s^2 \lesssim (1+ \Lambda^{-1}) \norm{ u_{\ms}^{(k)} - u_h }_a^2.$$
By Lemma \ref{lemma:4-on}, we have 
\begin{eqnarray}
\begin{split}
\mathcal{J}_2 \leq \sum_{i=1}^{N_c} \norm{\beta_{\text{glo}}^{\left(i,k\right)} - \beta_{\text{on}}^{\left(i,k\right)}}_a^2 + \norm{\pi (\beta_{\text{glo}}^{\left(i,k\right)} - \beta_{\text{on}}^{\left(i,k\right)})}_s^2 & \lesssim (1+ \Lambda^{-1}) E_m  
\norm{ u_{\ms}^{(k)} - u_h }_a^2.
\end{split}
\label{eqn:j2}
\end{eqnarray}
Combining \eqref{eqn:j1}, \eqref{eqn:main-2-1}, and \eqref{eqn:j2}, we obtain 
\begin{eqnarray*}
\begin{split}
\norm{u_h - u_{\ms}^{(k+1)}}_a^2 & \lesssim \mathcal{J}_1 + \mathcal{J}_2 + \mathcal{J}_3 \\
& \lesssim (1+\Lambda^{-1})  \left [ E_m \left ( (1+D) (1+C_{\text{poin}}^2)(m+1)^d + M \right ) + M^2 \theta \right ] 
\norm{u_h - u_{\ms}^{(k)}}_a^2.
\end{split}
\end{eqnarray*}
This completes the proof. 
\end{proof}

\section{Numerical experiments}
\label{sec:numerical}

In this section, we present some numerical examples with high-contrast media 
to demonstrate the effectiveness and efficiency of the proposed online adaptive method. 
We set the computational domain to be $\Omega = (0,1)^2$. We partition the domain into $16 \times 16$ square elements with mesh size $H = \sqrt{2}/16$ to form the coarse grid $\mathcal{T}^H$. Next, for each coarse element, we further partition it into $16 \times 16$ square elements so that the mesh size $h$ of the fine grid $\mathcal{T}^h$ is $h = \sqrt{2}/256$. We use this fine grid to compute the reference solution $u_h$. 
In all the experiments, the IPDG penalty parameter in \eqref{eq:dg_bilinear} 
is set to be $\gamma = 4$ to ensure the coercivity of the bilinear form $a_{\dg}(\cdot,\cdot)$.
In the following examples, we compute the relative $L^2$ and energy errors as follows 
$$ e_{L^2}^k := \frac{\norm{u_h - u_{\ms}^{(k)}}_{L^2(\Omega)}}{\norm{u_h}_{L^2(\Omega)}} \quad \text{and} \quad e_{a}^k := \frac{\norm{u_h - u_{\ms}^{(k)}}_a}{\norm{u_h}_a}$$
to measure the accuracy of multiscale solutions. Also, we compute the numerical convergence rate as follows 
$$ \max_k \left ( \frac{ \norm{u_h - u_{\ms}^{(k+1)}}_a^2 }{ \norm{u_h - u_{\ms}^{(k)}}_a^2} \right )$$
where the maximum is taken over the set $\{1, \cdots, \texttt{NIter} \}$, where \texttt{NIter} is the maximum number of iterations in the online adaptive algorithm.  

\begin{example} \label{exp:1}
In the first example, we consider a highly heterogeneous permeability field $\kappa$ in the domain as shown in Figure \ref{fig:medium1}. 
The background value is $1$ (i.e., the blue region) and the contrast value in the channels and inclusions is $10^4$ (i.e., the yellow region). The permeability field is a piecewise constant function on the fine grid. 
The source function is set to be $f(x_1, x_2) = \left ((x_1-0.5)^2 + (x_2- 0.5)^2 \right )^{-1/4}$ for any $(x_1, x_2) \in \Omega$. 
We pick $2$ auxiliary basis functions per coarse element and set the number of oversampling layers to be $2$ to form the offline multiscale space. 
In the online stage, we set the number of oversampling layers to be $3$ to construct online basis functions. 
\end{example}

\begin{figure}[ht!]
\centering
\includegraphics[width=0.45\linewidth]{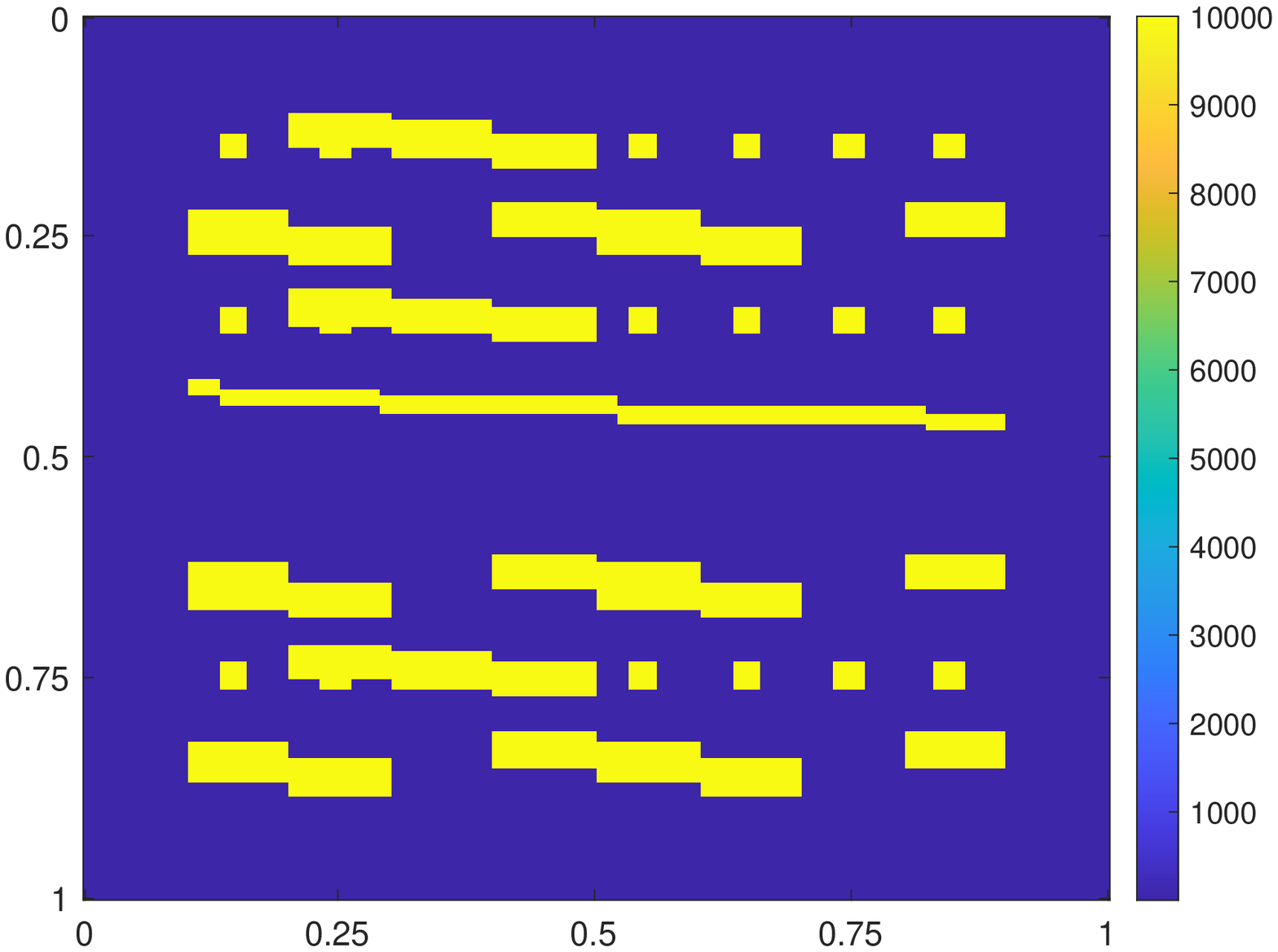} 
\caption{The permeability field $\kappa$ in Example \ref{exp:1}.}
\label{fig:medium1}
\end{figure}

We present the numerical results with different values of $\theta$ in the online adaptive algorithm. In Table \ref{tab:exp1-uni}, we present the $L^2$ and energy errors with uniform enrichment, that is $\theta = 0$. We remark that one may exclude the online basis functions whose corresponding local residuals are too small to avoid the singularity of the stiffness matrix. The column of DOFs stands for the total degrees of freedom in the current multiscale space.
In the case of uniform enrichment, one can observe a fast convergence rate; the energy error (resp. $L^2$ error) has been driven down to smaller than $0.2\%$ (resp. $0.006\%$) after three iterations with $1056$ degrees of freedom in the online multiscale space. The numerical convergence rate of uniform enrichment is $0.0568$. 

\begin{table}[ht!]
\centering
\caption{Numerical results in Example \ref{exp:1}. Convergence rate is $0.0568$ ($\theta = 0$).}
\begin{tabular}{cc|cc}
$k$-th iteration & DOFs &  $e_{L^2}^k$ & $e_{a}^k$ \\
\hline
\hline
$0$ & $512$ & $36.507255\%$ & $59.595264\%$ \\ 
$1$ & $737$ & $0.518217\%$ & $6.547061\%$ \\ 
$2$ & $940$ & $0.037000\%$ & $1.017706\%$ \\ 
$3$ & $1056$ & $0.007833\%$ & $0.242581\%$ \\ 
\end{tabular}
\label{tab:exp1-uni}
\end{table}

Next, we consider the cases of adaptive enrichment with different values of the user-defined parameter. The numerical results of error decay with $\theta = 0.3$ and $\theta = 0.6$ are presented in Tables \ref{tab:exp1-theta-03} and \ref{tab:exp1-theta-06}, respectively. That is, one may only add online basis functions for regions which account for the largest $70\%$ and $40\%$ of the residuals, respectively. One can observe from Table \ref{tab:exp1-theta-03} that the numerical convergence rate is approximately equal to $0.3079$. 
The numerical convergence rate in the case of $\theta = 0.6$ is around $0.7008$. 
This confirms the theoretical assertion that the convergence rate can be controlled by the user-defined parameter $\theta$. 
Moreover, we note that the adaptive algorithm allows adding relatively few degrees of freedom to reduce both $L^2$ and energy error to a relatively small stage. 

\begin{table}[ht!]
\centering
\caption{Numerical results in Example \ref{exp:1}. Convergence rate is $0.3079$ ($\theta = 0.3$).}
\begin{tabular}{cc|cc}
$k$-th iteration & DOFs &  $e_{L^2}^k$ & $e_{a}^k$ \\
\hline
\hline
$0$ & $512$ & $36.507255\%$ & $59.595264\%$ \\ 
$1$ & $533$ & $7.749803\%$ & $27.445575\%$ \\ 
$2$ & $557$ & $1.269979\%$ & $11.161632\%$ \\ 
$3$ & $596$ & $0.328987\%$ & $5.458823\%$ \\ 
$4$ & $653$ & $0.131134\%$ & $3.029020\%$ \\ 
\end{tabular}
\label{tab:exp1-theta-03}
\end{table}

\begin{table}[ht!]
\centering
\caption{Numerical results in Example \ref{exp:1}. Convergence rate is $0.7008$ ($\theta = 0.6$).}
\begin{tabular}{cc|cc}
$k$-th iteration & DOFs &  $e_{L^2}^k$ & $e_{a}^k$ \\
\hline
\hline
$0$ & $512$ & $36.507255\%$ & $59.595264\%$ \\ 
$1$ & $521$ & $26.211906\%$ & $49.888899\%$ \\ 
$2$ & $531$ & $14.030041\%$ & $36.631615\%$ \\ 
$3$ & $541$ & $4.574557\%$ & $21.141146\%$ \\ 
$4$ & $554$ & $1.616813\%$ & $12.649745\%$ \\ 
$5$ & $568$ & $0.723712\%$ & $8.445237\%$ \\ 
$6$ & $586$ & $0.384609\%$ & $6.027194\%$ \\ 
\end{tabular}
\label{tab:exp1-theta-06}
\end{table}


\begin{example} \label{exp:2}
In the second example, we consider a highly channelized permeability field $\kappa$ in the domain as shown in Figure \ref{fig:medium2}. In the background region (i.e., the blue region), the value of the permeability is equal to $1$. On the other hand, the value of permeability field in the channelized region (i.e., the yellow region) is equal to $10^4$. The rest of the settings (source function, number of auxiliary modes, offline and online number of oversampling layers) are the same as in Example \ref{exp:1}. 
\end{example}

\begin{figure}[ht!]
\centering
\includegraphics[width=0.45\linewidth]{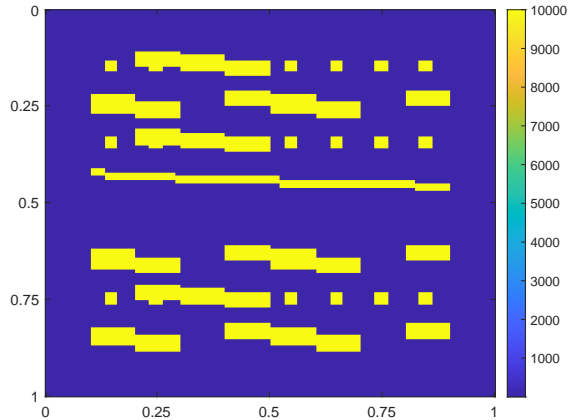} 
\caption{The permeability field $\kappa$ in Example \ref{exp:2}.}
\label{fig:medium2}
\end{figure}

We present the numerical results with uniform enrichment in Table \ref{tab:exp2-uni}. 
In this case, one can also observe a fast convergence rate as in Example \ref{exp:1}. 
Specifically, the energy error (resp. $L^2$ error) has been driven down to around $0.3\%$ (resp. $0.008\%$) after three iterations with $887$ degrees of freedom in the multiscale space. Here, we have applied the technique of singularity protection to avoid adding basis functions corresponding to extremely small local residuals. 
The numerical convergence rate of uniform enrichment is $0.1463$. 

\begin{table}[ht!]
\centering
\caption{Numerical results in Example \ref{exp:2}. Convergence rate is $0.1463$ ($\theta = 0$).}
\begin{tabular}{cc|cc}
$k$-th iteration & DOFs &  $e_{L^2}^k$ & $e_{a}^k$ \\
\hline
\hline
$0$ & $512$ & $46.799045\%$ & $66.972703\%$ \\ 
$1$ & $737$ & $0.533127\%$ & $7.126747\%$ \\ 
$2$ & $829$ & $0.013156\%$ & $0.807402\%$ \\ 
$3$ & $887$ & $0.008870\%$ & $0.308805\%$ \\ 
\end{tabular}
\label{tab:exp2-uni}
\end{table}

We consider the adaptive enrichment with different values of the user-defined parameter $\theta$ for the channelized case. The numerical results of error decay with $\theta = 0.3$ and $\theta = 0.6$ are presented in Tables \ref{tab:exp2-theta-03} and \ref{tab:exp2-theta-06}, respectively. 
One can observe from Table \ref{tab:exp2-theta-03} that the numerical convergence rate is approximately equal to $0.332$. 
On the other hand, from Table \ref{tab:exp2-theta-06}
the numerical convergence rate in the case of $\theta = 0.6$ is around $0.6049$. 
The adaptive algorithm allows adding relatively few degrees of freedom to reduce both $L^2$ and energy error to a relatively small stage. For instance, one can reduce the error to the level around $3.5\%$ by setting $\theta = 0.3$ with additionally $100$ more basis functions. 

\begin{table}[ht!]
\centering
\caption{Numerical results in Example \ref{exp:2}. Convergence rate is $0.3329$ ($\theta = 0.3$).}
\begin{tabular}{cc|cc}
$k$-th iteration & DOFs &  $e_{L^2}^k$ & $e_{a}^k$ \\
\hline
\hline
$0$ & $512$ & $46.799045\%$ & $66.972703\%$ \\ 
$1$ & $536$ & $15.586852\%$ & $38.639358\%$ \\ 
$2$ & $556$ & $2.285647\%$ & $14.834456\%$ \\ 
$3$ & $579$ & $0.616199\%$ & $7.719475\%$ \\ 
$4$ & $617$ & $0.139157\%$ & $3.544308\%$ \\ 
\end{tabular}
\label{tab:exp2-theta-03}
\end{table}

\begin{table}[ht!]
\centering
\caption{Numerical results in Example \ref{exp:2}. Convergence rate is $0.6049$ ($\theta = 0.6$).}
\begin{tabular}{cc|cc}
$k$-th iteration & DOFs &  $e_{L^2}^k$ & $e_{a}^k$ \\
\hline
\hline
$0$ & $512$ & $46.799045\%$ & $66.972703\%$ \\ 
$1$ & $524$ & $28.303085\%$ & $52.086409\%$ \\ 
$2$ & $535$ & $15.962970\%$ & $39.149738\%$ \\ 
$3$ & $547$ & $6.981006\%$ & $25.936662\%$ \\ 
$4$ & $559$ & $3.145128\%$ & $17.428573\%$ \\ 
$5$ & $570$ & $1.194350\%$ & $10.751979\%$ \\ 
$6$ & $583$ & $0.454958\%$ & $6.656748\%$ \\ 
\end{tabular}
\label{tab:exp2-theta-06}
\end{table}

In Figure \ref{fig:dist-exp2}, we show the distributions of the online basis functions at the final iteration for the adaptive enrichment (i.e., the cases when $\theta = 0.3$ and $\theta = 0.6$). For both cases, we see that basis functions are mostly added in the high permeability region $\{ 0.375 \leq x_1 \leq 0.625\}$ 
as shown in Figure~\ref{fig:medium2}. This suggests that the algorithm successfully identifies coarse regions which are highly correlated to the inaccuracy of the current solution using the residual information and includes online basis functions which have crucial effects in correcting the solution.

\begin{figure}[ht!]
\centering
\mbox{
\includegraphics[width=0.45\linewidth]{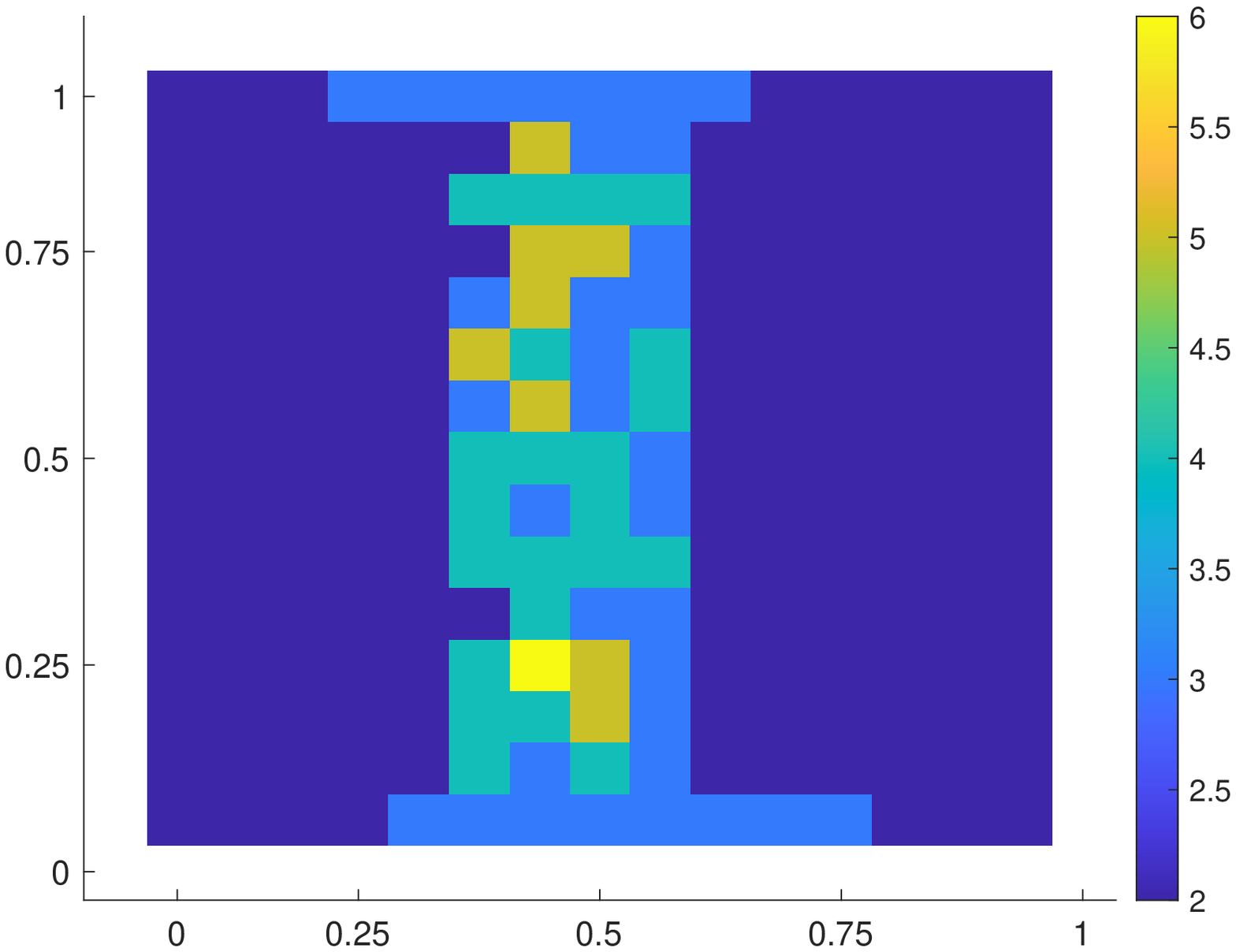}
\quad  
\includegraphics[width=0.45\linewidth]{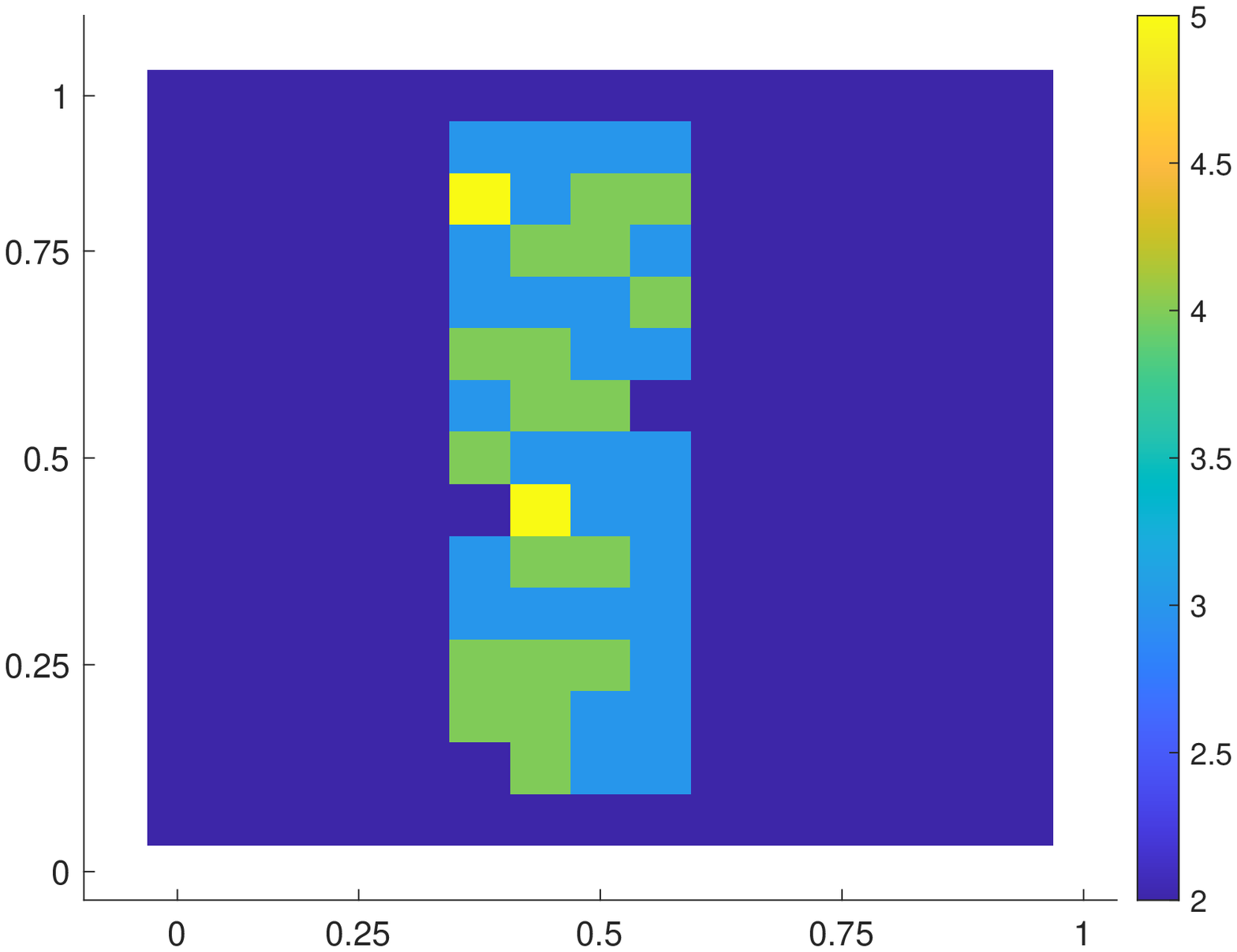}}
\caption{Distributions of online bases in Example \ref{exp:2}: $\theta = 0.3$ (left) and $\theta = 0.6$ (right).}
\label{fig:dist-exp2}
\end{figure}

\section{Conclusion}
\label{sec:conclusion}
In this research, we proposed an online adaptive strategy within the framework of the CEM-GMsDGM. 
The CEM-GMsDGM developed in \cite{cheung2020constraint} provides a systematic approach to construct offline multiscale basis functions that give a first-order convergence rate with respect to the coarse mesh size. 
The convergence rate is independent of the underlying heterogeneous media in the problem. 
In this work, 
the proposed algorithm gave a flexible approach to enrich multiscale degrees of freedom for error reduction during the online stage without any additional mesh refinement of the domain. 
The construction of  online basis functions was based on the oversampling technique and the information of local residuals with respect to the current multiscale approximation. 
The analysis showed that the convergence rate of the online adaptive algorithm depends on the factor of exponential decay and a user-defined parameter. 
Numerical experiments were provided to validate the analytical estimate. In the future, we will extend this method to wave equations and convection-diffusion equations. 

\section*{Acknowledgement}
This work was performed under the auspices of the U.S. Department of Energy by Lawrence Livermore National Laboratory under Contract DE-AC52-07NA27344.

\bibliographystyle{plain}
\bibliography{references}

\appendix 
\section{Proofs of technical results} \label{sec:appen}
In this appendix, we prove the Lemmas \ref{lemma:2-on} and \ref{lemma:3-on}. To this aim, we define a class of cutoff functions that will be used below. For each coarse block $K_i \in \mathcal{T}^H$, we define $\chi_i^{\mathcal{M},m} \in \text{span} \{ \chi_j \}_{j=1}^{N_c}$ such that 
$$ 0 \leq \chi_i^{\mathcal{M},m} \leq 1 \quad \text{and} \quad \chi_i^{\mathcal{M},m} = \left \{ \begin{array}{cl} 
1 & \text{in} ~ K_{i,m}, \\
0 & \text{in} ~ \Omega \setminus K_{i,\mathcal{M}}. \end{array} \right .$$
Here, $m$ and $\mathcal{M}$ are two given nonnegative integers with $m<\mathcal{M}$. 
Moreover, we define the following DG norm on $K_{i,\mathcal{M}} \setminus K_{i,m}$ formed by a union of coarse blocks $K \in \mathcal{T}^H$ 
$$\norm{w}_{\dg(K_{i,\mathcal{M}} \setminus K_{i,m})} := \left ( \sum_{K_i \subset K_{i,\mathcal{M}} \setminus K_{i,m}} a_i(w,w)  + \sum_{E \in \mathcal{E}^H (K_{i,\mathcal{M}} \setminus K_{i,m}) } \int_E \overline{\kappa} \llbracket w \rrbracket^2 \, d\sigma \right )^{1/2},$$
where $\mathcal{E}^H (K_{i,\mathcal{M}} \setminus K_{i,m})$ denotes the collection of all coarse edges in $\mathcal{E}^H$ which lie within the interior of $K_{i,\mathcal{M}} \setminus K_{i,m}$ and the boundary of $K_{i,\mathcal{M}}$. 
To simplify notations, we write 
$$ \norm{\psi}_{\text{CEM}} := \left ( \norm{\psi}_{\dg}^2 + \norm{\pi(\psi)}_s^2 \right )^{1/2} \quad \text{and} \quad
\norm{\psi}_{\text{CEM}(\Omega')} := \left ( \norm{\psi}_{\dg(\Omega')}^2 + \norm{\pi(\psi)}_{s(\Omega')}^2 \right )^{1/2}, $$ 
where $\Omega' \subseteq \Omega$ is formed by a union of coarse blocks $K \in \mathcal{T}^H$.

\subsection{Proof of Lemma \ref{lemma:2-on}}
Subtracting \eqref{eq:var2} from \eqref{eq:var2_glo}, we have 
$$ a_{\dg} (\psi_j^{(i)} - \psi_{j,\ms}^{(i)} , v ) + s(\pi (\psi_j^{(i)} - \psi_{j,\ms}^{(i)}), \pi(v) ) = 0 \quad \text{for any}~ v \in V_h(K_{i,m}).$$ 
Taking $v = w- \psi_{j,\ms}^{(i)}$ with $w \in V_h(K_{i,m})$, one can show that 
\begin{eqnarray}
\norm{\psi_{j}^{(i)}  - \psi_{j,\ms}^{(i)}}_a^2 + \norm{\pi(\psi_{j}^{(i)}  - \psi_{j,\ms}^{(i)})}_s^2  \leq \norm{\psi_{j}^{(i)}  -w}_a^2 + \norm{\pi(\psi_{j}^{(i)}  -w)}_s^2
\label{eqn:lem2-1}
\end{eqnarray}
for any $w \in V_h(K_{i,m})$. 
Taking $w = I_h(\chi_i^{m,m-1} \psi_j^{(i)})$ in \eqref{eqn:lem2-1}, we obtain 
\begin{eqnarray}
\begin{split}
\norm{\psi_{j}^{(i)}  - \psi_{j,\ms}^{(i)}}_{\text{CEM}}^2 & \lesssim \norm{\psi_j^{(i)} - I_h (\chi_i^{m,m-1}\psi_{j}^{(i)})}_{\text{CEM}}^2 \\
& \lesssim \norm{(1- \chi_i^{m,m-1}) \psi_j^{(i)}}_{\text{CEM}}^2 + \norm{ (1 - I_h) \chi_i^{m,m-1} \psi_j^{(i)} }_{\text{CEM}}^2 .
\end{split}
\label{eqn:lem2-2}
\end{eqnarray}
To estimate the term $\norm{(1- \chi_i^{m,m-1})\psi_{j}^{(i)}}_{\text{CEM}}^2$, we use the fact that 
$$\nabla \left ( (1- \chi_i^{m,m-1}) \psi_j^{(i)} \right ) = - \nabla \chi_i^{m,m-1} \psi_j^{(i)} + (1- \chi_i^{m,m-1}) \nabla \psi_j^{(i)}, \quad 0 \leq 1- \chi_i^{m,m-1} \leq 1$$ to obtain 
\begin{eqnarray}
\begin{split}
\norm{(1- \chi_i^{m,m-1})\psi_{j}^{(i)}}_{\dg}^2 
& = 
 \sum_{K \in \mathcal{T}^H} \int_K \kappa \Abs{\nabla (1- \chi_i^{m,m-1})\psi_{j}^{(i)}}^2 \, dx \\
& \quad +  \dfrac{\gamma}{h} \sum_{E \in \mathcal{E}^H} \int_E \overline{\kappa} \left \llbracket (1- \chi_i^{m,m-1})\psi_{j}^{(i)} \right  \rrbracket^2 \, d\sigma \\
& \lesssim \sum_{K \in \mathcal{T}^H} \int_K \kappa \Abs{\nabla \chi_i^{m,m-1}}^2 \abs{\psi_j^{(i)}}^2 \, dx \\
& \quad +  \sum_{K \subset \Omega \setminus K_{i,m-1}} \int_K \kappa \abs{\nabla \psi_j^{(i)}}^2 \, dx + \dfrac{\gamma}{h} \sum_{E \in \mathcal{E}^H} \int_E \overline{\kappa} \left \llbracket \psi_{j}^{(i)} \right  \rrbracket^2 \, d\sigma  \\
& \leq  \norm{\psi_j^{(i)}}_{\dg(\Omega \setminus K_{i,m-1})}^2 + \norm{\psi_j^{(i)}}_{s(\Omega \setminus K_{i,m-1})}^2. 
\end{split}
\end{eqnarray}
Note that for each $K \in \mathcal{T}^H$, by \eqref{eqn:lambda-1}, we have 
\begin{eqnarray}
\begin{split}
\norm{\psi_{j}^{(i)}}_{s(K)}^2&= \norm{(1 - \pi) (\psi_{j}^{(i)})}_{s(K)}^2  + \norm{\pi(\psi_{j}^{(i)})}_{s(K)}^2\\
& \leq \Lambda^{-1} \int_K \kappa \abs{\nabla \psi_j^{(i)}}^2 dx + \norm{\pi(\psi_{j}^{(i)})}_{s(K)}^2\\
& \leq \Lambda^{-1} \norm{\psi_j^{(i)}}_{\dg(K)}^2 + \norm{\pi(\psi_{j}^{(i)})}_{s(K)}^2.
\end{split}
\label{eqn:lem2-2-0}
\end{eqnarray}
Therefore, we have 
\begin{eqnarray}
\norm{(1- \chi_i^{m,m-1})\psi_{j}^{(i)}}_{\dg}^2 \lesssim  (1+\Lambda^{-1}) \left ( \norm{\psi_j^{(i)}}_{\dg(\Omega \setminus K_{i,m-1})}^2 +\norm{\pi(\psi_j^{(i)})}_{s(\Omega \setminus K_{i,m-1})}^2 \right). 
\label{eqn:lem2-2-1}
\end{eqnarray}
Next, we estimate the term $\norm{\pi((1-\chi_i^{m,m-1})\psi_{j}^{(i)})}_s^2$. Using \eqref{eqn:lem2-2-0}, we have 
\begin{eqnarray}
\begin{split}
\norm{\pi((1-\chi_i^{m,m-1})\psi_{j}^{(i)})}_s^2 & \leq \norm{(1-\chi_i^{m,m-1})\psi_{j}^{(i)}}_s^2 \leq \norm{\psi_j^{(i)}}_{s(\Omega \setminus K_{i,m-1})}^2 \\ 
& \leq \Lambda^{-1}  \norm{\psi_j^{(i)}}_{\dg(\Omega \setminus K_{i,m-1})}^2 + \norm{\pi(\psi_j^{(i)})}_{s(\Omega \setminus K_{i,m-1})}^2.
\end{split}
\label{eqn:lem2-2-2}
\end{eqnarray}
Combining \eqref{eqn:lem2-2-1} and \eqref{eqn:lem2-2-2}, the first term on the right-hand side of the inequality \eqref{eqn:lem2-2} becomes 
\begin{eqnarray}
\norm{(1- \chi_i^{m,m-1})\psi_{j}^{(i)}}_{\text{CEM}}^2   \lesssim (1+\Lambda^{-1}) \norm{\psi_j^{(i)}}_{\text{CEM}(\Omega \setminus K_{i,m-1})}^2 .
\end{eqnarray}
Now we are going to estimate the term $\norm{ (1 - I_h) \chi_i^{m,m-1} \psi_j^{(i)} }_{\text{CEM}}^2$. Denote $$\rho := (1 - I_h) \chi_i^{m,m-1} \psi_j^{(i)}.$$ By the property of the cutoff function, we have $\rho = 0$ in $\Omega \setminus K_{i,m}$. On the other hand, since $\chi_i^{m,m-1} \equiv 1$ in $K_{i,m-1}$, we have $\rho = (1 - I_h) \psi_j^{(i)} = 0$ in $K_{i,m-1}$. 
As a result, we have $\text{supp}( \rho ) \subset K_{i,m} \setminus K_{i,m-1}$. Then, using \eqref{eq:interpolation}, we have 
\begin{eqnarray}
\begin{split}
\norm{\rho}_{\text{CEM}}^2 & = \norm{\rho}_{\dg}^2 + \norm{\pi (\rho)}_{s}^2\\
& =\sum_{K_t \subset K_{i,m} \setminus K_{i,m-1}} a_t(\rho, \rho) + \sum_{E \in \mathcal{E}^H(K_{i,m} \setminus K_{i,m-1}) } \int_E \overline{\kappa} \llbracket \rho \rrbracket^2 \, d\sigma + \norm{\pi (\rho)}_{s(K_{i,m}\setminus K_{i,m-1})}^2\\
& \lesssim C_I^2 \left ( \sum_{K_t \subset K_{i,m} \setminus K_{i,m-1}} a_t(\chi_i^{m,m-1} \psi_j^{(i)}, \chi_i^{m,m-1} \psi_j^{(i)}) \right . \\
& \quad \left . + \sum_{E \in \mathcal{E}^H(K_{i,m} \setminus K_{i,m-1}) } \int_E \overline{\kappa} \llbracket \chi_i^{m,m-1} \psi_j^{(i)} \rrbracket^2 \, d\sigma \right ) \\
& \lesssim \norm{\chi_i^{m,m-1} \psi_j^{(i)}}_{\dg(K_{i,m} \setminus K_{i,m-1})}^2 = \norm{\psi_j^{(i)}}_{\dg(\Omega \setminus K_{i,m-1})}^2.
\end{split}
\label{eqn:cutoff-int-est}
\end{eqnarray}
Consequently, the inequality \eqref{eqn:lem2-2} becomes 
\begin{eqnarray}
\norm{\psi_{j}^{(i)}  - \psi_{j,\ms}^{(i)}}_{\text{CEM}}^2   \lesssim (1+\Lambda^{-1}) \norm{\psi_j^{(i)}}_{\text{CEM}(\Omega \setminus K_{i,m-1})}^2 .
\label{eqn:lem2-3-0}
\end{eqnarray}

We estimate the term 
$$\norm{\psi_j^{(i)}}_{\text{CEM}(\Omega \setminus K_{i,m-1})}^2  = \norm{\psi_j^{(i)}}_{\dg(\Omega \setminus K_{i,m-1})}^2 +\norm{\pi(\psi_j^{(i)})}_{s(\Omega \setminus K_{i,m-1})}^2.$$ 
Denote $\xi := 1- \chi_i^{m-1,m-2}$. By the definition of \eqref{eq:var2_glo}, if we take the test function to be $w = I_h(\xi^2 \psi_j^{(i)})$, then we have 
\begin{eqnarray}
a_{\dg} (\psi_j^{(i)}, I_h(\xi^2 \psi_j^{(i)}))+ s(\pi(\psi_j^{(i)}), \pi (I_h(\xi^2 \psi_j^{(i)}))) = s(\phi_j^{(i)}, \pi(I_h(\xi^2 \psi_j^{(i)}))) = 0,
\label{eqn:lem2-3-1}
\end{eqnarray}
where the last equality follows from the fact that $\text{supp}(\phi_j^{(i)}) \cap \text{supp} ( \xi^2 ) = \emptyset$. Using the techniques of showing the formula (67) in \cite{cheung2020constraint}, one can show that 
\begin{eqnarray}
\begin{split}
\norm{\psi_j^{(i)}}_{\dg(\Omega \setminus K_{i,m-1})}^2 & \lesssim \norm{\xi \psi_j^{(i)}}_a^2 \leq a_{\dg}(\psi_j^{(i)}, \xi^2 \psi_j^{(i)}) + \norm{\psi_j^{(i)}}_{s(K_{i,m-1} \setminus K_{i,m-2})}^2 \\
& \leq a_{\dg}(\psi_j^{(i)}, I_h(\xi^2 \psi_j^{(i)})) + a_{\dg}(\psi_j^{(i)}, \xi^2 \psi_j^{(i)} - I_h(\xi^2 \psi_j^{(i)})) \\
& \quad + \norm{\psi_j^{(i)}}_{s(K_{i,m-1} \setminus K_{i,m-2})}^2. 
\end{split}
\label{eqn:lem2-3}
\end{eqnarray}
On the other hand, since $ \chi_i^{m-1,m-2} \equiv 0$ in $\Omega \setminus K_{i,m-1}$, we have 
$$
s(\pi(\psi_j^{(i)}), \pi (I_h(\xi^2 \psi_j^{(i)}))) = \norm{\pi(\psi_j^{(i)})}_{s(\Omega \setminus K_{i.m-1})}^2 + \int_{K_{i,m-1} \setminus K_{i,m-2}} \tilde \kappa \pi ( \psi_j^{(i)} )  I_h( \xi^2 \psi_j^{(i)}) dx. 
$$
Thus, we have 
\begin{eqnarray}
\begin{split}
\norm{\pi(\psi_j^{(i)})}_{s(\Omega \setminus K_{i,m-1})}^2& = s(\pi(\psi_j^{(i)}), \pi (I_h(\xi^2 \psi_j^{(i)})))  - \int_{K_{i,m-1} \setminus K_{i,m-2}} \tilde \kappa \pi ( \psi_j^{(i)} ) I_h( \xi^2 \psi_j^{(i)}) dx \\
& \leq s(\pi(\psi_j^{(i)}), \pi (I_h(\xi^2 \psi_j^{(i)}))) \\
& \quad + \norm{\pi(\psi_j^{(i)})}_{s(K_{i,m-1} \setminus K_{i,m-2})} \norm{I_h(\xi^2 \psi_j^{(i)})}_{s(K_{i,m-1} \setminus K_{i,m-2})}.
\end{split}
\label{eqn:lem2-4}
\end{eqnarray}
Using \eqref{eqn:lem2-3-1} and adding \eqref{eqn:lem2-3} and \eqref{eqn:lem2-4}, we have 
\begin{eqnarray}
\begin{split}
\norm{\psi_j^{(i)}}_{\text{CEM}(\Omega \setminus K_{i,m-1})}^2 & \leq a_{\dg}(\psi_j^{(i)}, \xi^2 \psi_j^{(i)} - I_h(\xi^2 \psi_j^{(i)})) + \norm{\psi_j^{(i)}}_{s(K_{i,m-1} \setminus K_{i,m-2})}^2\\
& \quad + \frac{1}{2} \left ( \norm{\pi(\psi_j^{(i)})}_{s(K_{i,m-1} \setminus K_{i,m-2})}^2 + \norm{I_h(\xi^2 \psi_j^{(i)})}_{s(K_{i,m-1} \setminus K_{i,m-2})}^2 \right ). \\
\end{split}
\label{eqn:lem2-5-0}
\end{eqnarray}
We first analyze the terms $a_{\dg}(\psi_j^{(i)}, \xi^2 \psi_j^{(i)} - I_h(\xi^2 \psi_j^{(i)}))$ and $\norm{I_h(\xi^2 \psi_j^{(i)})}_{s(K_{i,m-1} \setminus K_{i,m-2})}^2$. Denote 
$\tilde \rho := \xi^2 \psi_j^{(i)} - I_h(\xi^2 \psi_j^{(i)})$. 
Note that $\text{supp}(\tilde \rho )  \subset K_{i,m-1} \setminus K_{i,m-2}$. 
Using the same argument as in that of showing \eqref{eqn:cutoff-int-est}, we have 
\begin{eqnarray}
\norm{\tilde \rho}_{\dg(K_{i,m-1} \setminus K_{i,m-2})}^2 + \norm{\tilde \rho}_{s(K_{i,m-1} \setminus K_{i,m-2})}^2 \lesssim C_I^2 \norm{\xi^2 \psi_j^{(i)}}_{\dg(K_{i,m-1} \setminus K_{i,m-2})}^2.
\label{eqn:rho-tilde-est}
\end{eqnarray}
Note that $\abs{\xi} \leq 1$. Using the chain rule, we have $\nabla ( \xi^2 \psi_j^{(i)} ) = \xi^2 \nabla \psi_j^{(i)} + 2 \xi \psi_j^{(i)} \nabla \xi$
and we are able to show that 
\begin{eqnarray}
\norm{\xi^2 \psi_j^{(i)} }_{\dg(K_{i,m-1} \setminus K_{i,m-2})}^2 \lesssim \norm{\psi_j^{(i)}}_{\dg(K_{i,m-1} \setminus K_{i,m-2})}^2 + \norm{\psi_j^{(i)}}_{s(K_{i,m-1} \setminus K_{i,m-2})}^2.
\label{eqn:lem2-5-1}
\end{eqnarray}
For the first term in \eqref{eqn:lem2-5-0}, using the Young's inequality, we have 
\begin{eqnarray}
a_{\dg}(\psi_j^{(i)}, \tilde \rho) \lesssim \norm{\psi_j^{(i)}}_{\dg(K_{i,m-1} \setminus K_{i,m-2})}^2 +  \norm{\tilde \rho}_{\dg(K_{i,m-1} \setminus K_{i,m-2})}^2. 
\label{eqn:lem2-5-2}
\end{eqnarray}
Then, for the fourth term in \eqref{eqn:lem2-5-0}, we have 
\begin{eqnarray}
\norm{I_h(\xi^2 \psi_j^{(i)})}_{s(K_{i,m-1} \setminus K_{i,m-2})}^2 & \lesssim \norm{\tilde \rho}_{s(K_{i,m-1} \setminus K_{i,m-2})}^2 + \norm{\xi^2 \psi_j^{(i)}}_{s(K_{i,m-1} \setminus K_{i,m-2})}^2.
\label{eqn:lem2-5-3}
\end{eqnarray}
Combining \eqref{eqn:rho-tilde-est}, \eqref{eqn:lem2-5-1}, \eqref{eqn:lem2-5-2}, and \eqref{eqn:lem2-5-3}, the inequality \eqref{eqn:lem2-5-0} becomes 
\begin{eqnarray}
\begin{split}
\norm{\psi_j^{(i)}}_{\text{CEM}(\Omega \setminus K_{i,m-1})}^2 & \lesssim \norm{\psi_j^{(i)}}_{\text{CEM}(K_{i,m-1} \setminus K_{i,m-2})}^2 + \norm{\psi_j^{(i)}}_{s(K_{i,m-1} \setminus K_{i,m-2})}^2 \\
& \leq (1+ \Lambda^{-1}) \norm{\psi_j^{(i)}}_{\text{CEM}(K_{i,m-1} \setminus K_{i,m-2})}^2,
\end{split}
\end{eqnarray}
where the last inequality follows from \eqref{eqn:lambda-1}. 

Moreover, we note that the following inequality holds 
\begin{eqnarray*}
\begin{split}
\norm{\psi_j^{(i)}}_{\text{CEM}(\Omega \setminus K_{i,m-2})}^2 & = \norm{\psi_j^{(i)}}_{\text{CEM}(\Omega \setminus K_{i,m-1})}^2 + \norm{\psi_j^{(i)}}_{\text{CEM}(K_{i,m-1} \setminus K_{i,m-2})}^2 \\
& \gtrsim \left ( 1 + (1+\Lambda^{-1} )^{-1}  \right )  \norm{\psi_j^{(i)}}_{\text{CEM}(\Omega \setminus K_{i,m-1})}^2
\end{split}
\end{eqnarray*}
for any integer $m \geq 2$. 
Using the above inequality recursively, we obtain 
$$\norm{\psi_j^{(i)}}_{\text{CEM}(\Omega \setminus K_{i,m-1})}^2  \lesssim \left ( 1 + (1+\Lambda^{-1} )^{-1}  \right )^{1-m} \norm{\psi_j^{(i)}}_{\text{CEM}}^2. $$
Finally, the inequality \eqref{eqn:lem2-3-0} becomes 
$$\norm{\psi_{j}^{(i)}  - \psi_{j,\ms}^{(i)}}_{\text{CEM}}^2   \lesssim (1+\Lambda^{-1}) \left ( 1 + (1+\Lambda^{-1} )^{-1}  \right )^{1-m} \norm{\psi_j^{(i)}}_{\text{CEM}}^2. $$
This completes the proof due to the equivalence of the norms $\norm{\cdot}_a$ and $\norm{\cdot}_{\dg}$.

\subsection{Proof of Lemma \ref{lemma:3-on}}
Given any $\{ c_j^{(i)} \}$, we denote $w^{(i)} := \sum_{j=1}^{L_i} c_j^{(i)} (\psi_j^{(i)} - \psi_{j,\ms}^{(i)})$ and we write $w := \sum_{i=1}^N w^{(i)}$. Using the definition of the global and local multiscale basis functions in \eqref{eq:var2_glo} and \eqref{eq:var2} with test function $I_h((1- \chi_i^{m+1, m})w)$, we obtain 
\begin{eqnarray*}
\begin{split}
a_{\dg}\left ( I_h((1- \chi_i^{m+1, m})w), \sum_{j=1}^{L_i} c_j^{(i)} \psi_{j}^{(i)} \right ) + s\left ( \pi(I_h(1- \chi_i^{m+1, m})w)), \sum_{j=1}^{L_i} c_j^{(i)} \pi(\psi_{j}^{(i)}) \right ) & = 0, \\
a_{\dg}\left ( I_h((1- \chi_i^{m+1, m})w), \sum_{j=1}^{L_i} c_j^{(i)} \psi_{j,\ms}^{(i)} \right ) + s\left ( \pi(I_h(1- \chi_i^{m+1, m})w)), \sum_{j=1}^{L_i} c_j^{(i)} \pi(\psi_{j,\ms}^{(i)}) \right ) & = 0. \\
\end{split}
\end{eqnarray*}
Here, we have used the fact that $\text{supp}( \sum_{j=1}^{L_i} c_j^{(i)} \psi_{j,\ms}^{(i)} ) \subset K_{i,m}$. Subtracting the two equations above, we have 
\begin{eqnarray*}
\begin{split}
\norm{w}_a^2 + \norm{\pi(w)}_s^2 & = \sum_{i=1}^N a_{\dg}( I_h(\chi_i^{m+1,m} w), w^{(i)}) + s(\pi(I_h(\chi_i^{m+1,m} w)), \pi(w^{(i)})) \\
& \leq \sum_{i=1}^N \norm{I_h(\chi_i^{m+1,m}w)}_a \norm{w^{(i)}}_a + \norm{\pi(I_h(\chi_i^{m+1,m} w))}_s \norm{\pi(w^{(i)})}_s \\
& \leq \sum_{i=1}^N \left ( \norm{I_h(\chi_i^{m+1,m}w)}_a^2 + \norm{\pi(I_h(\chi_i^{m+1,m} w))}_s ^2  \right )^{1/2} \left ( \norm{w^{(i)}}_a^2 + \norm{\pi(w^{(i)})}_s^2 \right )^{1/2}  \\
& \lesssim (1+\Lambda^{-1})^{1/2} \left ( \norm{w}_a^2 + \norm{\pi(w)}_s^2 \right )^{1/2} \left ( \sum_{i=1}^N\norm{w^{(i)}}_a^2 + \norm{\pi(w^{(i)})}_s^2 \right )^{1/2},
\end{split} 
\end{eqnarray*}
where the last inequality follows from the same argument as showing \eqref{eqn:lem2-3-0} with the replacement of $\psi_j^{(i)}$ by $w$. 
Hence, we conclude that 
$$ \norm{w}_a^2 + \norm{\pi(w)}_s^2 \lesssim (1+\Lambda^{-1}) \sum_{i=1}^N \left ( \norm{w^{(i)}}_a^2 + \norm{\pi(w^{(i)})}_s^2 \right ).$$
\end{document}